\title[Variational methods for the solution of FDSL/FCSL problems]{Variational methods for the solution of fractional discrete/continuous Sturm--Liouville problems}
\author{Ricardo Almeida}
\address{Center for Research and Development in Mathematics and Applications (CIDMA)\\
Department of Mathematics, University of Aveiro\\
Aveiro, 3810-193\\
Portugal}
\email{ricardo.almeida@ua.pt}
\author{Agnieszka B. Malinowska}
\address{Faculty of Computer Science, Bialystok University of Technology\\
Wiejska 45A\\
Bia\l ystok, 15-351\\
Poland}
\email{a.malinowska@pb.edu.pl}
\author{M. Lu\'{i}sa Morgado}
\address{Department of Mathematics, University of Tr\'{a}s-Os-Montes e Alto Douro,UTAD\\
 5000-801, Vila Real\\
  Portugal}
\email{luisam@utad.pt}
\author{Tatiana Odzijewicz}
\address{Department of Mathematics and Mathematical Economics\\
Warsaw School of Economics\\
Al. Niepodleg\l o\'sci 162\\
Warsaw, 02-554\\
Poland}
\email{tatiana.odzijewicz@sgh.waw.pl}
\keywords{fractional Sturm--Liouville problem, fractional calculus of variations, discrete fractional calculus, continuous fractional calculus}
\numberwithin{equation}{section}
\newtheorem{example}{Example}[section]  
\newtheorem{theorem}[example]{Theorem}  
\newtheorem{definition}[example]{Definition}
\newtheorem{lemma}[example]{Lemma}
\newtheorem{proposition}[example]{Proposition}
\newcommand{\R}{\mathbb{R}}
\newcommand{\fonction}[5]{\begin{array}[t]{lrcl}#1 :&#2 &\longrightarrow &#3\\&#4& \longmapsto &#5 \end{array}}
\begin{document} 

\begin{abstract}
The fractional Sturm--Liouville eigenvalue problem appears in many situations, e.g., while solving
anomalous diffusion equations coming from physical and engineering applications. Therefore to obtain solutions or
approximation of solutions to this problem is of great importance. Here, we describe how
the fractional Sturm--Liouville eigenvalue problem can be formulated as a constrained fractional variational principle and show how such formulation can be used in order to approximate the solutions. Numerical examples are given, to illustrate the method.
\end{abstract}

\maketitle 

\section{Introduction}\label{intro}

Fractional calculus is a mathematical approach dealing with integral and differential terms of non-integer order. The concept of fractional calculus appeared shortly after calculus itself, but the development of practical applications proceeded very slowly. Only during the last decades, fractional problems have increasingly attracted the attention of many researchers. Applications of fractional operators include chaotic dynamics \cite{Zaslavsky},
material sciences \cite{book:Mainardi}, mechanics of fractal and complex media \cite{Carpinteri,Li}, quantum mechanics \cite{book:Hilfer}, physical kinetics \cite{Edelman} and many others (see e.g.,\cite{Domek,Tarasov}). Fractional derivatives are nonlocal operators and therefore successfully applied in the study of nonlocal or time-dependent processes \cite{book:Podlubny}. The well-established application of fractional calculus in physics is in the framework of anomalous diffusion behavior \cite{Bl,Chen,ovido,Leonenko,MMM,Metzler}: large jumps in space are modeled
by space-fractional derivatives of order between 1 and 2, while long waiting times are modeled by the time derivatives of order between 0 and 1. These partial fractional differential equations can be solved by the method of separating variables, which leads to the Sturm--Liouville and the Cauchy equations. It means that, if we are able to solve the fractional Sturm--Liouville problem and the Cauchy problem, then
we can find a solution to the fractional diffusion equation. In this paper, we consider two basic approaches to the fractional Sturm--Liouville problem: discrete and continuous. In both cases, we note that the problem can be formulated as a constrained fractional variational principle. A fractional variational problem consists in finding the extremizer of a functional that depends on fractional derivatives (differences) subject to boundary conditions and possibly some extra constraints. It is worthy to point out that the fractional calculus of variations has itself remarkable applications
in classical mechanics. Riewe \cite{CD:Riewe:1996,CD:Riewe:1997} showed that a Lagrangian involving fractional time derivatives leads to an equation of motion with non-conservative forces such as friction. For more about the fractional calculus of variations we refer the reader to \cite{cap1:book:frac:ICP2,cap1:book:Klimek,cap1:book:AD,cap1:book:ADT} while for various approaches to fractional Sturm--Liouville problems we refer to \cite{AlM09,QMQ,Tatiana1,K15,K16,zay}.

The paper is divided into two main parts dedicated, respectively, to discrete (Section~\ref{sec:DFC}) and continuous (Section~\ref{sec:CFC}) fractional problems.  In the first part we give a constructive proof of the existence of orthogonal solutions to the discrete fractional Sturm--Liouville eigenvalue problem (Theorem~\ref{existence_SL}), and show that the smallest and largest eigenvalues can be characterized as the optimal values of certain functionals (Theorem~\ref{th:VP} and Theorem~\ref{thm:RQ}). Our results are illustrated by an example. In the second part we recall the fractional variational principle and the spectral theorem for the continuous fractional Sturm--Liouville problem. Since for most problems involving fractional derivatives (equations or variational problems) one cannot provide methods to compute the exact solutions analytically, numerical methods should be used for solving such problems. Discretizing both the fractional Sturm--Liouville equation and related with it isoperimetric variational problem we show, by an example, how the variational method can be used for solving the fractional Sturm--Liouville problem.

\section{Discrete Fractional Calculus}\label{sec:DFC}

In this section we explain a relationship between the fractional Sturm--Liouville difference problem and a constrained discrete fractional variational principle. Namely, it is possible to look for solutions of Sturm--Liouville fractional difference equations by solving finite dimensional constrained optimization problems.
We shall start with necessary preliminaries. There are various versions of the fractional differences, we can mention here those introduced by Diaz and Osler \cite{diaz}, Miller and Ross \cite{miller}, Atici and Eloe \cite{Atici1,Atici2} or the Caputo difference \cite{abd}. In this paper, we use the notion of  Gr\"{u}nwald--Letnikov \cite{kaczorek,book:Podlubny}.

Let us define the mesh points
$x_j = a + jh,\quad j = 0, 1,\ldots, N,$
where $h$ denotes the uniform space step and set $D=\{x_0,\ldots,x_N\}$. In what follows $\alpha\in \R$ and $0<\alpha\leq 1$. Moreover, we set
\begin{equation}\label{coef}
a_i^{(\alpha)}:=
\begin{cases}
1, & \text{ if } i=0\\
(-1)^i\frac{\alpha (\alpha-1)\cdots (\alpha-i+1)}{i!}, & \text{ if } i=1,2,\ldots.
\end{cases}
\end{equation}

\begin{definition}
The backward fractional difference of order $\alpha$, where $0<\alpha\leq1$, of function $f:D\rightarrow \R$ is defined by
\begin{equation}\label{fb}
{_{0}}\Delta^{\alpha}_{k}f(x_k)
:=\frac{1}{h^{\alpha}}\sum_{i=0}^{k}(-1)^i\frac{\alpha (\alpha-1)\cdots (\alpha-i+1)}{i!}f(x_{k-i}).
\end{equation}
while
\begin{equation}\label{ff}
{_{k}}\Delta^{\alpha}_{N}f(x_k)
:=\frac{1}{h^{\alpha}}\sum_{i=0}^{N-k}(-1)^i\frac{\alpha (\alpha-1)\cdots (\alpha-i+1)}{i!}f(x_{k+i})
\end{equation}
is the forward fractional difference of function $f$.
\end{definition}

Fractional backward and forward differences are linear operators.

\begin{theorem}(cf. \cite{ostalczyk})
Let $f,g$ be two real functions defined on $D$ and $\beta,\gamma\in \R$. Then
$${_{0}}\Delta^{\alpha}_{k}[\gamma f(x_k)+\beta g(x_k)]
=\gamma {_{0}}\Delta^{\alpha}_{k}f(x_k)+\beta {_{0}}\Delta^{\alpha}_{k}g(x_k),$$
$${_{k}}\Delta^{\alpha}_{N}[\gamma f(x_k)+\beta g(x_k)]
=\gamma {_{k}}\Delta^{\alpha}_{N}f(x_k)+\beta {_{k}}\Delta^{\alpha}_{N}g(x_k),$$
for all $k$.
\end{theorem}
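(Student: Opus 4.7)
The plan is essentially to unwind the definitions \eqref{fb} and \eqref{ff} and exploit the linearity of finite summation. Both ${_{0}}\Delta^{\alpha}_{k}$ and ${_{k}}\Delta^{\alpha}_{N}$ are, by construction, just the multiplication of pointwise function values $f(x_{k\pm i})$ by scalar coefficients $a_i^{(\alpha)}/h^{\alpha}$ that, according to \eqref{coef}, depend only on $\alpha$ and $i$, not on the function being differentiated. Linearity will therefore follow from purely algebraic manipulations.

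More concretely, for the backward case I would substitute the combination $\gamma f(x_k)+\beta g(x_k)$ into \eqref{fb}, obtaining
$$
{_{0}}\Delta^{\alpha}_{k}[\gamma f(x_k)+\beta g(x_k)]
=\frac{1}{h^{\alpha}}\sum_{i=0}^{k} a_i^{(\alpha)}\bigl[\gamma f(x_{k-i})+\beta g(x_{k-i})\bigr],
$$
then split the sum into the $f$-part and the $g$-part and factor $\gamma$ and $\beta$ out of each. Matching the resulting expressions against \eqref{fb} applied to $f$ and $g$ separately yields the claimed identity. The argument for the forward difference is identical after replacing the summation range $i=0,\ldots,k$ with $i=0,\ldots,N-k$ and invoking \eqref{ff}.

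There is no real obstacle here; the only point worth noting is that the coefficients $a_i^{(\alpha)}$ and the upper summation limits depend only on $\alpha$, $h$, and the indices $k,N$, so splitting the sum is unambiguous. The whole argument amounts to observing that a finite $\R$-linear combination of evaluation functionals is itself $\R$-linear.
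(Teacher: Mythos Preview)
Your proposal is correct: the operators are finite linear combinations of evaluation functionals with function-independent coefficients, so linearity is immediate from the definitions. Note, however, that the paper does not supply its own proof of this statement---it merely cites it from \cite{ostalczyk}---so there is no in-paper argument to compare against; your write-up would in fact fill a gap the authors left to the reference.
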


The following formula of the summation by parts for fractional operators will be essential for proving results concerning variational problems.

\begin{lemma}(cf. \cite{loic1})
 Let $f$, $g$ be two real functions defined on $D$. Then
 \begin{equation*}
\sum_{k=0}^{N}g(x_k){_{0}}\Delta^{\alpha}_{k}f(x_k)=\sum_{k=0}^{N}f(x_k){_{k}}\Delta^{\alpha}_{N}g(x_k).
\end{equation*}
If $f(x_0)=f(x_N)=0$ or $g(x_0)=g(x_N)=0$, then
\begin{equation}\label{int_by_parts}
\sum_{k=1}^{N}g(x_k){_{0}}\Delta^{\alpha}_{k}f(x_k)=\sum_{k=0}^{N-1}f(x_k){_{k}}\Delta^{\alpha}_{N}g(x_k).
\end{equation}
\end{lemma}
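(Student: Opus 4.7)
My plan is to prove the first identity by a direct double-sum manipulation, and then deduce the second identity by observing that the hypothesized boundary conditions kill precisely the terms by which the two index ranges differ.

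\textbf{Main identity.} I would start by substituting the definition \eqref{fb} on the left-hand side, producing
\[
\sum_{k=0}^{N} g(x_k)\,{_{0}}\Delta^{\alpha}_{k}f(x_k) = \frac{1}{h^{\alpha}}\sum_{k=0}^{N}\sum_{i=0}^{k} a_i^{(\alpha)}\, g(x_k)\, f(x_{k-i}).
\]
The next step is to change variable from $i$ to $j:=k-i$ in the inner sum, turning the double sum into $\sum_{k=0}^{N}\sum_{j=0}^{k} a_{k-j}^{(\alpha)} g(x_k) f(x_j)$. I would then swap the order of summation, using that the pairs $(k,j)$ with $0\le j\le k\le N$ can equivalently be enumerated as $0\le j\le N$, $j\le k\le N$. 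Finally, reindexing by $i:=k-j$ in the inner sum gives $\sum_{j=0}^{N} f(x_j) \sum_{i=0}^{N-j} a_i^{(\alpha)} g(x_{j+i})$, which by \eqref{ff} equals $\sum_{j=0}^{N} f(x_j)\,{_{j}}\Delta^{\alpha}_{N} g(x_j)$, as required.

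\textbf{Boundary version.} To obtain \eqref{int_by_parts}, note that in the definitions \eqref{fb} and \eqref{ff} the case $i=0$ gives $a_0^{(\alpha)}=1$, so ${_{0}}\Delta^{\alpha}_{0}f(x_0)=h^{-\alpha}f(x_0)$ and ${_{N}}\Delta^{\alpha}_{N}g(x_N)=h^{-\alpha}g(x_N)$. Hence in the already-proven identity, the $k=0$ term on the left equals $h^{-\alpha}g(x_0)f(x_0)$ and the $k=N$ term on the right equals $h^{-\alpha}f(x_N)g(x_N)$. If either $f$ or $g$ vanishes at both endpoints $x_0$ and $x_N$, both of these terms vanish, so the left sum may start at $k=1$ and the right sum may stop at $k=N-1$, yielding \eqref{int_by_parts}.

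\textbf{Potential pitfalls.} The only step requiring care is the index swap. One has to verify that the region $\{(k,i):0\le k\le N,\ 0\le i\le k\}$ in the $(k,i)$ plane transforms, under $j=k-i$, into $\{(k,j):0\le j\le k\le N\}$, which is indeed the triangle whose swapped description is $\{(j,k):0\le j\le N,\ j\le k\le N\}$. No convergence issues arise since all sums are finite, and the binomial coefficients $a_i^{(\alpha)}$ in \eqref{coef} need no special handling because they are simply transported through the reindexing. Apart from this combinatorial bookkeeping, the argument is routine.
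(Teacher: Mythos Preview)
Your argument is correct. The double-sum swap for the main identity is handled cleanly, and your observation that ${_{0}}\Delta^{\alpha}_{0}f(x_0)=h^{-\alpha}f(x_0)$ and ${_{N}}\Delta^{\alpha}_{N}g(x_N)=h^{-\alpha}g(x_N)$ is exactly what is needed to drop the endpoint terms under either boundary hypothesis.

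Note, however, that the paper does not supply its own proof of this lemma: it is stated with a reference (``cf.\ \cite{loic1}'') and used as a known tool. So there is no proof in the paper to compare against. Your direct Fubini-type computation is the standard way to establish this summation-by-parts formula and would be a fine self-contained justification to include in place of the citation.
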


\subsection{The Sturm--Liouville Problem}
\label{SLProblem}
 In this subsection our topic is the Sturm--Liouville fractional difference equation:
 \begin{equation}\label{SL_eq}
{_{k}}\Delta^{\alpha}_{N}\left(p(x_k){_{0}}\Delta^{\alpha}_{k}y(x_k)\right)+q(x_k)y(x_k)=\lambda r(x_k)y(x_k), \quad k=1,\ldots,N-1,
\end{equation}
with boundary conditions:
\begin{equation}\label{SL_eq_bc}
y(x_0)=0,\quad y(x_N)=0.
\end{equation}
We assume that $p(x_i)>0$, $r(x_i)>0$, $q(x_i)$ is defined and real valued for all $x_i$, $i=0,\ldots,N$, and $\lambda$ is a parameter. It is required to find the eigenfunctions and the eigenvalues of the given boundary value problem, i.e., the nontrivial solutions of \eqref{SL_eq}--\eqref{SL_eq_bc} and the corresponding values of the parameter $\lambda$. Theorem below gives an answer to this question.

\begin{theorem}\label{existence_SL}
The Sturm--Liouville problem \eqref{SL_eq}--\eqref{SL_eq_bc} has $N-1$ real eigenvalues, which we denote by
$$\lambda_1\leq \lambda_2\leq\cdots \leq \lambda_{N-1}.$$
The corresponding eigenfunctions, $$y^1,y^2,\ldots,y^{N-1}:\{x_1,\ldots,x_{N-1}\}\rightarrow \R,$$ are mutually orthogonal: if $i\neq j$, then
$$\langle y^i,y^j\rangle_{r}:=\sum_{k=1}^{N-1}r(x_k)y^i(x_{k})y^j(x_{k})=0,$$
and they span $\R^{N-1}$: any vector $\varphi=\left(\varphi(x_{k})\right)_{k=1}^{N-1}\in \R^{N-1}$ has a unique expansion
$$\varphi(x_{k})=\sum_{i=1}^{N-1}c_iy^i(x_{k}),\quad 1\leq k\leq N-1.$$
The coefficients $c_i$ are given by
$$c_i=\frac{\langle \varphi,y^i\rangle_{r}}{\langle y^i,y^i\rangle_{r}}.$$
\end{theorem}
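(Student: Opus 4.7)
The plan is to reduce the problem to a finite-dimensional self-adjoint eigenvalue problem on $\R^{N-1}$ and then invoke the spectral theorem; the orthogonality and expansion clauses then follow formally.

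First I would observe that, because of the homogeneous boundary conditions $y(x_0)=y(x_N)=0$, a solution of \eqref{SL_eq}--\eqref{SL_eq_bc} is completely determined by its interior vector $(y(x_1),\ldots,y(x_{N-1}))\in\R^{N-1}$. Define a linear operator $L\colon\R^{N-1}\to\R^{N-1}$ by
$$(Ly)(x_k):={_{k}}\Delta^{\alpha}_{N}\!\left(p(x_k){_{0}}\Delta^{\alpha}_{k}y(x_k)\right)+q(x_k)y(x_k),\qquad 1\le k\le N-1,$$
where $y$ is understood to be extended by $0$ at $x_0$ and $x_N$, and let $R=\mathrm{diag}(r(x_1),\ldots,r(x_{N-1}))$. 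Since $r(x_k)>0$, the pairing $\langle\cdot,\cdot\rangle_{r}$ in the statement is a bona fide inner product on $\R^{N-1}$, and \eqref{SL_eq} is recast as the generalized eigenvalue problem $Ly=\lambda Ry$, equivalently $R^{-1}Ly=\lambda y$.

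The crucial step is to prove that $L$ is symmetric for the standard Euclidean inner product (so that $R^{-1}L$ is self-adjoint for $\langle\cdot,\cdot\rangle_{r}$). Applying the summation-by-parts identity \eqref{int_by_parts} with $g=z$ (vanishing at $x_0,x_N$) and $f(x_k)=p(x_k){_{0}}\Delta^{\alpha}_{k}y(x_k)$, after a short bookkeeping of the boundary indices one converts
$$\sum_{k=1}^{N-1}z(x_k){_{k}}\Delta^{\alpha}_{N}\!\left(p(x_k){_{0}}\Delta^{\alpha}_{k}y(x_k)\right)\quad\text{into}\quad \sum_{k=1}^{N}p(x_k)\,{_{0}}\Delta^{\alpha}_{k}y(x_k)\,{_{0}}\Delta^{\alpha}_{k}z(x_k),$$
which is manifestly symmetric in $y$ and $z$. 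Combined with the obvious symmetry of $\sum q(x_k)y(x_k)z(x_k)$, this yields $\langle Ly,z\rangle=\langle y,Lz\rangle$ and hence the required self-adjointness of $R^{-1}L$ on $(\R^{N-1},\langle\cdot,\cdot\rangle_{r})$. The finite-dimensional spectral theorem then delivers $N-1$ real eigenvalues $\lambda_1\le\cdots\le\lambda_{N-1}$ (counted with multiplicity) and a $\langle\cdot,\cdot\rangle_{r}$-orthogonal basis $y^1,\ldots,y^{N-1}$ of $\R^{N-1}$; the expansion $\varphi=\sum_i c_i y^i$ is just the writing of $\varphi$ in this basis, and pairing with $y^j$ and using orthogonality gives $c_j=\langle\varphi,y^j\rangle_r/\langle y^j,y^j\rangle_r$. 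As a constructive variant, better aligned with the variational viewpoint of Theorem~\ref{th:VP} and Theorem~\ref{thm:RQ}, one can instead take $y^1$ to be a minimizer of the Rayleigh-type quotient $\langle Ly,y\rangle/\langle y,y\rangle_{r}$ on the compact $r$-unit sphere, and extract $y^2,y^3,\ldots$ successively by minimizing on the $r$-orthogonal complements of $\mathrm{span}\{y^1,\ldots,y^{i-1}\}$.

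The main obstacle is the symmetry verification, since ${_{0}}\Delta^{\alpha}_{k}$ and ${_{k}}\Delta^{\alpha}_{N}$ are nonlocal operators pointing in opposite directions; one must carefully keep track of the boundary indices $k=0,N$ when invoking \eqref{int_by_parts} to ensure that no residual boundary contributions remain. This is exactly where the Dirichlet conditions $y(x_0)=y(x_N)=0$ enter the argument.
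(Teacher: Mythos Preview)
Your proposal is correct and follows essentially the same strategy as the paper: reduce \eqref{SL_eq}--\eqref{SL_eq_bc} to a finite-dimensional generalized eigenvalue problem $Ay=\lambda Ry$ on $\R^{N-1}$, invoke the spectral theorem, and read off the expansion coefficients by orthogonality. The only noteworthy difference is how symmetry of $A$ (equivalently, self-adjointness of $R^{-1}A$ in $\langle\cdot,\cdot\rangle_r$) is established: the paper writes down the matrix entries $A_{ij}^{(\alpha)}$ explicitly and asserts symmetry, whereas you derive it cleanly from the summation-by-parts identity \eqref{int_by_parts}, which is arguably more transparent and avoids the bookkeeping of the double-sum formula. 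Your phrasing ``$R^{-1}L$ is self-adjoint for $\langle\cdot,\cdot\rangle_r$'' is also more accurate than the paper's ``symmetric matrix $R^{-1}A$'', since $R^{-1}A$ is not Euclidean-symmetric in general when $r$ is nonconstant.
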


\begin{proof}
Observe that equations \eqref{SL_eq}--\eqref{SL_eq_bc} can be considered as a system of $N-1$ linear equations with $N-1$ real unknowns $y(x_1),\ldots,y(x_{N-1})$. The corresponding matrix form is as follows:
\begin{equation}\label{M_sl}
Ay^T=\lambda Ry^T,
\end{equation}
where the entries $A_{ij}$ of $A$ are
\begin{equation*}
A_{ij}^{(\alpha)}=
\begin{cases}
\frac{1}{h^{2\alpha}}\left[q(x_i)+\sum_{k=0}^{N-i}(a_k^{(\alpha)})^2p(x_{i+k})\right], \quad i=j\\
\frac{1}{h^{2\alpha}}\left[\sum_{k=0}^{N-i}a_k^{(\alpha)}p(x_{i+k})\sum_{m=0}^{k+i}a_m^{(\alpha)}\right] \text{ and } k-m+i=j,\quad i\neq j.
\end{cases}
\end{equation*}
and $R=diag\{r(x_1),\ldots,r(x_{N-1})\}$.
Writing \eqref{M_sl} as
\begin{equation}\label{M2_sl}
R^{-1}Ay^T=\lambda y^T
\end{equation}
we get an eigenvalue problem with the symmetric matrix $R^{-1}A$. Because of the equivalence of problem \eqref{SL_eq}--\eqref{SL_eq_bc} with problem \eqref{M2_sl} it follows from matrix theory that the Sturm--Liouville problem \eqref{SL_eq}--\eqref{SL_eq_bc} has $N-1$ linearly pairwise orthogonal real independent eigenfunctions with all eigenvalues real. Now we would like to find constants $c_1,\ldots,c_{N-1}$ such that $\varphi(x_{k})=\sum_{i=1}^{N-1}c_iy^i(x_{k})$, $1\leq k\leq N-1$. Note that
\begin{equation*}
\langle \varphi,y^j\rangle_{r}=\langle \sum_{i=1}^{N-1}c_iy^i,y^j\rangle_{r}
= \sum_{i=1}^{N-1}c_i\langle y^i,y^j\rangle_{r}=c_j\langle y^j,y^j\rangle_{r}
\end{equation*}
because of orthogonality. Therefore $c_i=\frac{\langle \varphi,y^i\rangle_{r}}{\langle y^i,y^i\rangle_{r}}$, $1\leq i\leq N-1$.
\end{proof}

\subsection{Isoperimetric Variational Problems}
\label{IVProblem}

In this section we prove two theorems connecting the Sturm--Liouville problem \eqref{SL_eq}--\eqref{SL_eq_bc} with isoperimetric problems of discrete fractional calculus of variations.

\begin{theorem}\label{th:VP}
Let $y^1$ denote the first eigenfunction, normalized to satisfy the isoperimetric constraint
\begin{equation}\label{IC}
I[y]=\sum_{k=1}^{N}r(x_{k})(y(x_{k}))^2=1
\end{equation}
associated to the first eigenvalue $\lambda_1$ of problem \eqref{SL_eq}--\eqref{SL_eq_bc}. Then $y^1$ is a minimizer of functional
\begin{equation}\label{VP}
J[y]=\sum_{k=1}^{N}\left[p(x_k)\left({_{0}}\Delta^{\alpha}_{k}y(x_k)\right)^2+q(x_k)(y(x_k))^2\right]
\end{equation}
subject to boundary condition $y(x_0)=0$, $y(x_N)=0$ and isoperimetric constraint \eqref{IC}. Moreover $J[y^1]=\lambda_1$.
\end{theorem}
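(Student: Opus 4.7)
The plan is to derive an Euler--Lagrange-type necessary condition via a discrete analogue of the Lagrange multiplier rule, then use the spectral decomposition furnished by Theorem~\ref{existence_SL} to pin down which stationary point is the minimum.

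\textbf{Step 1 (Variational equation).} Form the augmented functional $L[y]=J[y]-\lambda\bigl(I[y]-1\bigr)$ and consider variations $y\mapsto y+\epsilon\eta$ with $\eta(x_0)=\eta(x_N)=0$. Differentiating at $\epsilon=0$ gives
\[
\sum_{k=1}^{N}\Bigl[2p(x_k)\,{_0}\Delta^{\alpha}_{k}y(x_k)\cdot{_0}\Delta^{\alpha}_{k}\eta(x_k)+2\bigl(q(x_k)-\lambda r(x_k)\bigr)y(x_k)\eta(x_k)\Bigr]=0.
\]
Apply \eqref{int_by_parts} to the first sum (valid because $\eta$ vanishes at the endpoints), and use $\eta(x_0)=0$ to trim the resulting sum from $k=0$ to $N-1$ down to $k=1$ to $N-1$. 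Arbitrariness of $\eta(x_k)$ for $1\le k\le N-1$ then forces exactly \eqref{SL_eq} at each interior node. Thus every admissible stationary point of $L$ is an eigenfunction of \eqref{SL_eq}--\eqref{SL_eq_bc} and the multiplier $\lambda$ is its eigenvalue.

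\textbf{Step 2 (Bilinear form on the eigenbasis).} Let $z$ be any eigenfunction with eigenvalue $\mu$, and let $y$ be admissible. Setting $g(x_k):=p(x_k)\,{_0}\Delta^{\alpha}_{k}z(x_k)$, applying \eqref{int_by_parts}, and substituting the Sturm--Liouville equation for $z$ yields
\[
\mathcal{B}(y,z):=\sum_{k=1}^{N}p(x_k)\,{_0}\Delta^{\alpha}_{k}y(x_k)\,{_0}\Delta^{\alpha}_{k}z(x_k)+\sum_{k=1}^{N}q(x_k)y(x_k)z(x_k)=\mu\,\langle y,z\rangle_{r},
\]
where the terms $q(x_N)y(x_N)z(x_N)$ and $r(x_N)y(x_N)z(x_N)$ may be adjoined at no cost since $z(x_N)=0$. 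Note that $J[y]=\mathcal{B}(y,y)$.

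\textbf{Step 3 (Minimality).} By Theorem~\ref{existence_SL}, choose eigenfunctions $y^{1},\ldots,y^{N-1}$ orthonormal with respect to $\langle\cdot,\cdot\rangle_{r}$. Any admissible $y$ expands as $y=\sum_{i=1}^{N-1}c_{i}y^{i}$, and the isoperimetric constraint $I[y]=1$ becomes $\sum_{i}c_{i}^{2}=1$. Using Step~2 bilinearly together with orthonormality,
\[
J[y]=\mathcal{B}(y,y)=\sum_{i=1}^{N-1}\lambda_{i}c_{i}^{2}\;\ge\;\lambda_{1}\sum_{i=1}^{N-1}c_{i}^{2}=\lambda_{1},
\]
with equality at $y=y^{1}$. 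Hence $y^{1}$ is a minimizer and $J[y^{1}]=\lambda_{1}$.

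\textbf{Expected obstacle.} The main bookkeeping difficulty lies in Step~2: the functional $J$ is a sum up to $k=N$, whereas equation \eqref{SL_eq} is posed only for $1\le k\le N-1$, and \eqref{int_by_parts} shifts the summation bounds asymmetrically. The Dirichlet conditions $y(x_0)=y(x_N)=0$ are exactly what is needed to absorb the endpoint contributions and identify $\mathcal{B}(y,z)$ with $\mu\langle y,z\rangle_r$; once that identification is in hand, the remainder is standard Rayleigh-quotient reasoning.
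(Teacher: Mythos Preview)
Your proof is correct, but it takes a genuinely different route from the paper's own argument. The paper proceeds indirectly: it \emph{assumes} a minimizer $y$ exists, invokes an external Lagrange-multiplier result (Theorem~5 of \cite{malodz}) to conclude that $y$ solves \eqref{SL_eq} for some $\lambda$, then multiplies \eqref{SL_eq} by $y(x_k)$, sums, and applies \eqref{int_by_parts} to obtain $J[y]=\lambda$. Finally it observes that $\lambda$ must be an eigenvalue and that $J[y^{1}]=\lambda_1$, so the minimum value is $\lambda_1$. Your Steps~2--3, by contrast, give a direct Rayleigh-quotient argument: you diagonalize $J$ on the $r$-orthonormal eigenbasis from Theorem~\ref{existence_SL} and obtain the pointwise inequality $J[y]=\sum_i\lambda_i c_i^2\ge\lambda_1$ for \emph{every} admissible $y$, with equality at $y^1$. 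This is more self-contained (no appeal to \cite{malodz}, no implicit use of compactness to secure a minimizer) and also slightly stronger, since it immediately characterizes all minimizers as lying in the $\lambda_1$-eigenspace. Note, incidentally, that your Step~1 is redundant for your own argument: once Steps~2--3 are in place, the Euler--Lagrange analysis is never used. It is precisely the ingredient the paper relies on, so you have in effect written both proofs.
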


\begin{proof}
Suppose that $y$ is a minimizer of $J$. Then, by Theorem~5 \cite{malodz}, there exists a real constant $\lambda$ such that $y$ satisfies equation
\begin{equation}\label{eq:SLE0}
{_{k}}\Delta^{\alpha}_{N}\left(p(x_k){_{0}}\Delta^{\alpha}_{k}y(x_k)\right)+q(x_k)y(x_k)-\lambda r(x_k)y(x_k)=0, \quad k=1,\ldots,N-1,
\end{equation}
together with $y(x_0)=0$, $y(x_N)=0$ and isoperimetric constraint \eqref{IC}.
Let us multiply \eqref{eq:SLE0} by $y(x_{k})$ and sum up from $k=1$ to $N-1$, then
\begin{equation*}
\sum_{k=1}^{N-1}\left[y(x_k){_{k}}\Delta^{\alpha}_{N}\left(p(x_k){_{0}}\Delta^{\alpha}_{k}y(x_k)\right)+q(x_k)(y(x_k))^2\right]=\sum_{k=1}^{N-1}\lambda r(x_k)(y(x_k))^2
\end{equation*}
By summation by parts \eqref{int_by_parts}
\begin{equation*}
\sum_{k=1}^{N-1}y(x_k){_{k}}\Delta^{\alpha}_{N}\left(p(x_k){_{0}}\Delta^{\alpha}_{k}y(x_k)\right)=
\sum_{k=1}^{N}p(x_k)\left({_{0}}\Delta^{\alpha}_{k}y(x_k)\right)^2.
\end{equation*}
As \eqref{IC} holds and $y(x_N)=0$ we obtain
\begin{equation*}
J[y]=\lambda.
\end{equation*}
Any solution to problem \eqref{IC}--\eqref{VP} that satisfies equation \eqref{eq:SLE0} must be nontrivial since \eqref{IC} holds, so $\lambda$ must be an eigenvalue. According to Theorem~\ref{existence_SL} there is the least element in the spectrum being eigenvalue $\lambda_1$, and the corresponding eigenfunction $y^{(1)}$ normalized to meet the isoperimetric condition. Therefore $J[y^{(1)}]=\lambda_1$.
\end{proof}

\begin{definition}
We will call functional $R$ defined by
\begin{equation*}
R[y]=\frac{J[y]}{I[y]},
\end{equation*}
where $J[y]$ is given by \eqref{VP} and $I[y]$ by \eqref{IC}, the Rayleigh quotient for the fractional discrete Sturm--Liouville problem \eqref{SL_eq}--\eqref{SL_eq_bc}.
\end{definition}

\begin{theorem}\label{thm:RQ}
Assume that $y$ satisfies boundary conditions $y(x_0)=y(x_N)=0$ and is nontrivial.
 \begin{itemize}
 \item[(i)] If $y$ is a minimizer of Rayleigh quotient $R$ for the Sturm--Liouville problem \eqref{SL_eq}--\eqref{SL_eq_bc}, then value of $R$ in $y$ is equal to the smallest eigenvalue $\lambda_1$, i.e., $R[y]=\lambda_1$.
  \item[(ii)] If $y$ is a maximizer of Rayleigh quotient $R$ for the Sturm--Liouville problem \eqref{SL_eq}--\eqref{SL_eq_bc}, then value of $R$ in $y$ is equal to the largest eigenvalue $\lambda_{N-1}$, i.e., $R[y]=\lambda_{N-1}$.
      \end{itemize}
\end{theorem}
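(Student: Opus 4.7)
The plan is to exploit the scale invariance of the Rayleigh quotient, $R[cy] = R[y]$ for every nonzero constant $c$, which allows me to replace any extremizer by a rescaled version satisfying the isoperimetric condition $I[y] = 1$ without changing the value of $R$. After this normalization $R[y] = J[y]$, and the constrained extremizers of $J$ coincide with the extremizers of $R$: for any competitor $z$ with $I[z] = 1$ we have $R[z] = J[z]$, so $z$ beats the normalized $y$ for $J$ if and only if it beats it for $R$.

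For part (i), this reduction immediately places me in the setting of Theorem~\ref{th:VP}: the rescaled $y$ is a minimizer of $J$ subject to the boundary conditions and the constraint $I[y] = 1$, so $J[y] = \lambda_1$ and therefore $R[y] = \lambda_1$.

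For part (ii), I need the maximization counterpart of Theorem~\ref{th:VP}, which I would obtain by rerunning the proof of that theorem essentially verbatim. Any maximizer of $J$ subject to $I[y] = 1$ satisfies, by the Lagrange multiplier result invoked in the proof of Theorem~\ref{th:VP}, the Euler--Lagrange equation \eqref{eq:SLE0} for some real multiplier $\lambda$; multiplying by $y(x_k)$, summing $k=1,\ldots,N-1$, applying the summation by parts \eqref{int_by_parts}, and using $I[y]=1$ together with $y(x_0)=y(x_N)=0$ yields $J[y] = \lambda$. Hence $\lambda$ is an eigenvalue and the maximum of $J$ over the constraint set equals the largest eigenvalue $\lambda_{N-1}$, by Theorem~\ref{existence_SL}; rescaling back gives $R[y] = \lambda_{N-1}$.

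The only delicate point is the existence of extremizers for $R$, which on the finite-dimensional space of admissible functions $\{y : y(x_0) = y(x_N) = 0\}$ is handled by the scaling reduction to the compact sphere $I[y] = 1$ together with continuity of $J$. Everything else is a direct appeal to Theorem~\ref{th:VP} for (i) and its mirror-image argument for (ii); no new computation beyond the one already performed in the proof of Theorem~\ref{th:VP} is required.
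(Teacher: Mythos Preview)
Your argument is correct, but it proceeds along a somewhat different path than the paper's own proof. The paper takes the first variation of the Rayleigh quotient directly: it introduces $\zeta(h)=R[y+h\eta]$, computes $\zeta'(0)=0$ via the quotient rule, applies summation by parts to obtain the Sturm--Liouville equation \eqref{SL_eq} with parameter $\lambda=R[y]$, and then separately verifies $R[y^i]=\lambda_i$ for every eigenpair before comparing. You instead exploit the homogeneity $R[cy]=R[y]$ to reduce the unconstrained extremization of $R$ to the constrained extremization of $J$ on the sphere $I[y]=1$, and then invoke Theorem~\ref{th:VP} (and its maximization analogue) as a black box.

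Both routes are sound; yours is arguably more economical for part~(i), since it recycles Theorem~\ref{th:VP} without repeating the Euler--Lagrange derivation, while the paper's direct variation of $R$ is self-contained and does not require the preliminary normalization step. For part~(ii) the two approaches essentially coincide once unwound, since your ``mirror-image'' of Theorem~\ref{th:VP} requires exactly the Lagrange-multiplier and summation-by-parts computation that the paper carries out for $\zeta'(0)$. One minor remark: your closing sentence about existence of extremizers is unnecessary here, since the theorem hypothesizes that $y$ \emph{is} an extremizer; but it does no harm.
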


\begin{proof}
We give the proof only for the case (i) as the second case can be proved similarly. Suppose that $y$ satisfying boundary conditions $y(x_0)=y(x_N)=0$ and being nontrivial, is a minimizer of Rayleigh quotient $R$ and that value of $R$ in $y$ is equal to $\lambda$.
Consider the following functions
\begin{equation*}
\fonction{\phi}{[-\varepsilon,\varepsilon]}{\R}{h}{I[y+h\eta]=\displaystyle\sum_{k=1}^{N}r(x_{k})(y(x_{k})+h\eta(x_{k}))^2}
\end{equation*}
\begin{equation*}
\fonction{\psi}{[-\varepsilon,\varepsilon]}{\R}{h}{J[y+h\eta]=\displaystyle \sum_{k=1}^{N}\left[p(x_k)\left({_{0}}\Delta^{\alpha}_{k}(y(x_k)+h\eta(x_{k}))\right)^2+q(x_k)(y(x_k)+h\eta(x_{k}))^2\right]}
\end{equation*}
and
\begin{equation*}
\fonction{\zeta}{[-\varepsilon,\varepsilon]}{\R}{h}{R[y+h\eta]=\frac{J[y+h\eta]}{I[y+h\eta]},}
\end{equation*}
where $\eta: D \rightarrow \R$, $\eta(x_0)=\eta(x_N)=0$, $\eta\neq 0$.
Since $\zeta$ is of class $C^1$ on $[-\varepsilon,\varepsilon]$ and
$$
\zeta(0)\leq\zeta (h),~~\left|h\right|\leq\varepsilon,
$$
we deduce that
\begin{equation*}
\zeta'(0)=\left.\frac{d}{dh}R[y+h\eta]\right|_{h=0}=0.
\end{equation*}
Moreover, notice that
\begin{equation*}
\zeta'(h)=\frac{1}{\phi(h)}\left(\psi'(h)-\frac{\psi(h)}{\phi(h)}\phi'(h)\right)
\end{equation*}
and
\begin{equation*}
\psi'(0)=\left.\frac{d}{dh}J[y+h\eta]\right|_{h=0}=2\sum_{k=1}^{N}\left[p(x_k){_{0}}\Delta^{\alpha}_{k}y(x_k){_{0}}\Delta^{\alpha}_{k}\eta(x_k)
+q(x_k)y(x_k)\eta(x_k)\right],
\end{equation*}
\begin{equation*}
\phi'(0)=\left.\frac{d}{dh}I[y+h\eta]\right|_{h=0}=2\sum_{k=1}^{N}r(x_{k})y(x_{k})\eta(x_{k}).
\end{equation*}
Therefore
\begin{multline*}
\zeta'(0)=\left.\frac{d}{dh}R[y+h\eta]\right|_{h=0}\\
=\frac{2}{I[y]}\left[\sum_{k=1}^{N}\left[p(x_k){_{0}}\Delta^{\alpha}_{k}y(x_k){_{0}}\Delta^{\alpha}_{k}\eta(x_k)
+q(x_k)y(x_k)\eta(x_k)\right]-\frac{J[y]}{I[y]}\sum_{k=1}^{N}r(x_{k})y(x_{k})\eta(x_{k})\right]=0.
\end{multline*}
Having in mind that $\frac{J[y]}{I[y]}=\lambda$, $\eta(x_0)=\eta(x_N)=0$ and using the summation by parts formula \eqref{int_by_parts} we obtain
\begin{equation*}
\sum_{k=1}^{N-1}\left[{_{k}}\Delta^{\alpha}_{N}\left(p(x_k){_{0}}\Delta^{\alpha}_{k}y(x_k)\right)+q(x_k)y(x_k)-\lambda r(x_k)y(x_k)\right]\eta(x_k)=0.
\end{equation*}
Since $\eta$ is arbitrary, we have
\begin{equation}\label{eq:aa}
{_{k}}\Delta^{\alpha}_{N}\left(p(x_k){_{0}}\Delta^{\alpha}_{k}y(x_k)\right)+q(x_k)y(x_k)-\lambda r(x_k)y(x_k)=0,\quad k=1,\ldots,N-1.
\end{equation}
As $y\neq 0$ we have that $\lambda$ is an eigenvalue of \eqref{eq:aa}.
On the other hand, let $\lambda_i$ be an eigenvalue and $y^i$ the corresponding eigenfunction, then
\begin{equation}\label{eq:SLRa}
{_{k}}\Delta^{\alpha}_{N}\left(p(x_k){_{0}}\Delta^{\alpha}_{k}y_i(x_k)\right)+q(x_k)y_i(x_k)=\lambda_i r(x_k)y^i(x_k).
\end{equation}
Similarly to the proof of Theorem~\ref{th:VP}, we can obtain
\begin{equation*}
\frac{\displaystyle \sum_{k=1}^{N}\left[p(x_k)\left({_{0}}\Delta^{\alpha}_{k}y^i(x_k)\right)^2+q(x_k)(y^i(x_k))^2\right]}{\displaystyle \sum_{k=1}^{N}r(x_{k})(y^i(x_{k}))^2}=\lambda_i,
\end{equation*}
for any $1 \leq i \leq N-1$. That is $R[y^i]=\frac{J[y^i]}{I[y^{i}]}=\lambda_{i}$. Finally, since the minimum value of $R$ at $y$ is equal to $\lambda$, i.e.,
\begin{equation*}
\lambda\leq R[y^{i}]=\lambda_i~~\forall i\in\{1,\ldots,N-1\}
\end{equation*}
we have $\lambda=\lambda_1$.
\end{proof}

\begin{example}
Let us consider the following problem: minimize
\begin{equation}\label{ex:f}
J[y]=\sum_{k=1}^{N}\left({_{0}}\Delta^{\alpha}_{k}y(x_k)\right)^2
\end{equation}
subject to
\begin{equation}\label{ex:f:3}
I[y]=\sum_{k=1}^{N}\left(y(x_k)\right)^2=1
\end{equation}
and $y(x_0)=y(x_N)=0$, where  $N$ is fixed. In this case the Euler--Lagrange equation takes the form
\begin{equation}\label{ex:SL}
{_{k}}\Delta^{\alpha}_{N}{_{0}}\Delta^{\alpha}_{k}y(x_k)=\lambda y(x_k), \quad k=1,\ldots,N-1.
\end{equation}
Together with boundary condition $y(x_0)=y(x_N)=0$ it is the Sturm--Liouville eigenvalue problem where
$p(x_i)=1$, $r(x_i)=1$ and $q(x_i)=0$ for $k=1,\ldots,N-1$. Let us choose $N=4$ and $h=1$. Eigenvalues of \eqref{ex:SL} for different values of $\alpha$'s are presented in Table~\ref{Tb1}. Those results are obtained by solving the matrix eigenvalue problem of the form \eqref{M2_sl}.

\begin{table}[ht]
\centering
\begin{tabular}{|c|c|c|c|c|}\hline

$\alpha$ & $\lambda_1$ & $ \lambda_2$ &$\lambda_3$ \\ \hline

$0.25$  & $0.7102065750$ & $1.148567387$ &$1.349294886$  \\ \hline

$0.50$  &$0.6004483933$ & $1.353660384$ &$1.831047473$  \\ \hline

$0.75$  & $0.5779798778$ & $1.632135974$ &$2.496488153$  \\ \hline

$1$  & $0.5857864376$ & $2.0$ &$3.414213562$  \\ \hline
\end{tabular}
\\
\caption{Eigenvalues of \eqref{ex:SL} for different values of $\alpha$'s: $1/4,1/2,3/4,1$.}
\label{Tb1}
\end{table}

 Observe that problem \eqref{ex:f}--\eqref{ex:f:3} can be treated as a finite dimensional constrained optimization problem. Namely, the problem is to minimize function $J$ of $N-1$ variables: $y_1=y(x_1),\ldots,y_{N-1}=y(x_{N-1})$ on the $N-1$ dimensional sphere with equation $\sum_{k=1}^{N-1}y_k^2=1$. Table~\ref{Tb} and Figure~\ref{plotdys} present the solution to problem \eqref{ex:f}--\eqref{ex:f:3} for $N=4$, $h=1$ and different values of $\alpha$'s. By Theorem~\ref{th:VP} the first eigenvalue $\lambda_1$ of \eqref{ex:SL} is the minimum value of $J$ on $\sum_{k=1}^{N_1}y_k^2=1$ and the first eigenfunction of \eqref{ex:SL} is the minimizer of this problem.  Other eigenfunctions and eigenvalues of  \eqref{ex:SL} we can found by using the first order necessary optimality conditions (Karush--Kuhn--Tucker conditions), that is, by solving the following system of equations:
 \begin{equation}\label{system}
\left\{
  \begin{array}{l}
   \frac{\partial J}{\partial y_k}=\lambda \frac{\partial I}{\partial y_k}, \quad k=1,\ldots,N-1,\\
    \sum_{k=1}^{N-1}y_k^2=1.
  \end{array}
\right.
\end{equation}

\begin{figure}
  \centering
 \includegraphics[width=7cm]{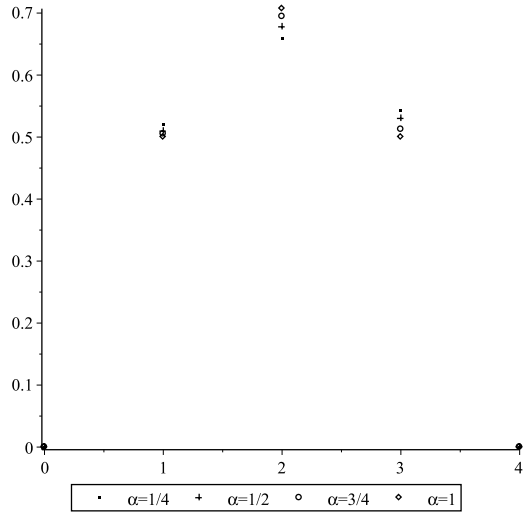}\\
  \caption{The solution to problem \eqref{ex:f}--\eqref{ex:f:3} for different values of $\alpha$'s: $1/4,1/2,3/4,1$.}\label{plotdys}
\end{figure}

\begin{table}[ht]
\centering
\begin{tabular}{|c|c|c|c|c|c|c|}\hline

$\alpha$ & $y(x_1)$ & $ y(x_2)$ & $ y(x_3)$ & $\lambda_1$ \\ \hline

$0.25$  & $0.52042378274$ & $0.65949734450$ &$0.54242265711$ &$0.7102065749$ \\ \hline

$0.50$  &$0.50954825567$ & $0.67778735991$ &$0.53006119446$ &$0.6004483933$ \\ \hline

$0.75$  & $0.50509466979$ & $0.69443334582$ &$0.51248580736$  &$0.5779798777$ \\ \hline

$1$  & $0.49999999999$ & $0.70710678118$ &$0.5$ &$0.5857864376$  \\ \hline
\end{tabular}
\\
\caption{The solution to problem \eqref{ex:f}--\eqref{ex:f:3} for different values of $\alpha$'s: $1/4,1/2,3/4,1$.}
\label{Tb}
\end{table}

\end{example}

\section{Continuous Fractional Calculus}
\label{sec:CFC}

This section is devoted to the continuous fractional Sturm--Liouville problem and its formulation as a constrained fractional variational principle. Namely, we shall show that this formulation can be used to approximate the solutions. As in the discrete case there are several different definitions for fractional derivatives \cite{Kilbas}, the most well known are the Gr\"{u}nwald--Letnikov, the Riemann--Liouville and the Caputo fractional derivatives.
\begin{definition}
Let $f:[a,b]\rightarrow\mathbb{R}$ be a function and  $\alpha$ a positive real number such that $0<\alpha<1$. We define
\begin{enumerate}
\item{the left and right Riemann--Liouville fractional derivatives of order $\alpha$ by
$${_aD_x^\alpha}f(x):=\frac{1}{\Gamma(1-\alpha)}\frac{d}{dx}\int_a^x(x-t)^{-\alpha}f(t)dt,$$
and
$${_xD_b^\alpha}f(x):=\frac{-1}{\Gamma(1-\alpha)}\frac{d}{dx}\int_x^b(t-x)^{-\alpha} f(t)dt,$$
respectively;}
\item{the left and right Caputo fractional derivatives of order $\alpha$ by
$${_a^CD_x^\alpha}f(x):=\frac{1}{\Gamma(1-\alpha)}\int_a^x (x-t)^{-\alpha}f'(t)dt,$$
and
$${_x^CD_b^\alpha}f(x):=\frac{-1}{\Gamma(1-\alpha)}\int_x^b(t-x)^{-\alpha} f'(t)dt,$$
respectively.}
\end{enumerate}
\end{definition}

The Caputo derivative seems more suitable in applications. Let us recall that the Caputo
derivative of a constant is zero, whereas for the Riemann--Liouville is not. Moreover, the Laplace transform, which is used for solving fractional differential equations, of the Riemann-Liouville derivative contains the limit values of
the Riemann-Liouville fractional derivatives (of order $\alpha-1$) at the lower
terminal $x = a$. Mathematically such problems can be solved, but there is no physical interpretation
for such type of conditions. On the other hand the Laplace transform of the Caputo derivative imposes
boundary conditions involving the value of function at the lower point $x = a$ which usually are acceptable physical conditions. \\
The Gr\"{u}nwald--Letnikov definition is a generalization of the ordinary discretization formulas for integer order derivatives.

\begin{definition}
Let $0<\alpha<1$ be a real. The left and right Gr\"{u}nwald--Letnikov fractional derivative of a function $f$, of order $\alpha$, is defined as
$${_a^{GL}D_x^\alpha}f(x):=\lim_{h\rightarrow 0^+} \frac{1}{h^\alpha}\sum_{k=0}^\infty(-1)^k\binom{\alpha}{k}f(x-kh),$$
and
$${_x^{GL}D_b^\alpha}f(x):=\lim_{h\rightarrow 0^+} \frac{1}{h^\alpha}\sum_{k=0}^\infty(-1)^k\binom{\alpha}{k}f(x+kh),$$
respectively.
\end{definition}

Here $\binom{\alpha}{k}$ stands for the generalization of binomial coefficients to real numbers (see \eqref{coef}). However in this section, for historical reasons, we denote
$$(w_k^\alpha):= (-1)^k\binom{\alpha}{k}$$
rather than $a_i^{(\alpha)}$.

Relations between those three types of derivatives are given below and can be
found respectively in \cite{book:Podlubny,Kilbas}.

\begin{proposition}
Let us assume that the function $f$ is integrable in $[a, b]$. Then,
the Riemann-Liouville fractional derivatives exist and coincide with Gr\"{u}nwald--Letnikov fractional derivatives.
\end{proposition}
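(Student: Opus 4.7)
The plan is to prove the coincidence for the left-hand operators only; the right-hand statement follows from the reflection $t\mapsto a+b-t$, which interchanges the left and right versions in both definitions. Throughout, I extend $f$ by zero outside $[a,b]$ so that the infinite series in the Gr\"{u}nwald--Letnikov formula becomes effectively a finite sum once $h<x-a$, and set $h=(x-a)/n$ with $n\to\infty$.

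The key algebraic ingredients are the Gamma-function representations
\begin{equation*}
(-1)^k\binom{\alpha}{k}=\frac{\Gamma(k-\alpha)}{\Gamma(-\alpha)\Gamma(k+1)}, \qquad W_k:=\sum_{j=0}^{k}(-1)^{j}\binom{\alpha}{j}=\frac{\Gamma(k-\alpha+1)}{\Gamma(1-\alpha)\Gamma(k+1)},
\end{equation*}
both derivable from the generating identity $(1-z)^\alpha=\sum_k(-z)^k\binom{\alpha}{k}$, together with the Stirling asymptotic $W_k\sim k^{-\alpha}/\Gamma(1-\alpha)$. With these in hand I would apply Abel summation (discrete integration by parts) to the GL sum, obtaining
\begin{equation*}
\frac{1}{h^\alpha}\sum_{k=0}^{n}(-1)^k\binom{\alpha}{k}f(x-kh) = \frac{W_n}{h^\alpha}\,f(a) + \sum_{k=0}^{n-1}\frac{W_k}{h^{\alpha-1}}\cdot\frac{f(x-kh)-f(x-(k+1)h)}{h}.
\end{equation*}
Under the above asymptotic $W_k/h^{\alpha-1}\approx h(kh)^{-\alpha}/\Gamma(1-\alpha)$, so the sum on the right is a Riemann sum for $\frac{1}{\Gamma(1-\alpha)}\int_a^x(x-\tau)^{-\alpha}f'(\tau)\,d\tau$, while the boundary term tends to $f(a)/[(x-a)^\alpha\Gamma(1-\alpha)]$; their combination equals ${_aD_x^\alpha}f(x)$ by the classical decomposition of the Riemann--Liouville derivative into its Caputo part plus boundary correction.

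The main obstacle is that the Riemann-sum interpretation just sketched invokes $f'$, which need not exist when $f$ is merely integrable, while the statement of the proposition assumes only integrability. I would handle this by a density argument: approximate $f$ in $L^1([a,b])$ by smooth compactly supported $f_m$, apply the calculation above to each $f_m$, and transfer the conclusion to $f$ via a uniform estimate on the GL partial sums. The required uniform bound ultimately reduces to the $L^1$-integrability of the kernel $(x-t)^{-\alpha}$ on $[a,x]$, which holds precisely because $0<\alpha<1$. In parallel one verifies that the resulting limit coincides almost everywhere with $\frac{d}{dx}\left(\frac{1}{\Gamma(1-\alpha)}\int_a^x(x-t)^{-\alpha}f(t)\,dt\right)$, so that the Riemann--Liouville derivative exists wherever the GL limit does.
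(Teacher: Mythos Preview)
The paper does not prove this proposition at all; it is stated without proof and referred to the monographs of Podlubny and Kilbas--Srivastava--Trujillo (see the sentence just above the proposition: ``Relations between those three types of derivatives are given below and can be found respectively in \ldots''). So there is no argument in the paper to compare against; the authors are simply quoting a known result.

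Your Abel-summation route is essentially the one Podlubny carries out, and for sufficiently regular $f$ (say $f$ absolutely continuous, or $f\in C^1$) it works as you describe: the summed-by-parts expression is a Riemann sum for the Caputo integral plus the boundary correction, and these recombine into the Riemann--Liouville derivative. The weak point is the density step. Point evaluations $f(x-kh)$ are not well-defined for a generic $L^1$ function, so approximating $f$ in $L^1$ by smooth $f_m$ gives no control over the difference $\tfrac{1}{h^\alpha}\sum_k(-1)^k\binom{\alpha}{k}\bigl[f(x-kh)-f_m(x-kh)\bigr]$; one would need a maximal-function type bound, and the claimed reduction to integrability of $(x-t)^{-\alpha}$ is not sufficient because the GL expression is a weighted sum of point values, not an integral against that kernel. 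In fact the proposition as stated (mere integrability implies existence of the Riemann--Liouville derivative and coincidence with the GL limit) is stronger than what the cited sources actually prove: Podlubny's argument assumes $f$ continuous with $f'$ integrable, and the general $L^1$ case requires a different formulation (almost-everywhere existence, or working with fractional integrals rather than derivatives). So the obstacle you flagged is real, and the density patch as sketched does not close it without extra hypotheses on $f$.
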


\begin{proposition}
Let us assume that $f$ is
a function for which the Caputo fractional derivatives
exist together with the Riemann-Liouville fractional
derivatives in $[a, b]$. Then
\begin{equation}\label{approx_Caputo}{_a^CD_x^\alpha}f(x)={_aD_x^\alpha}f(x)-\frac{f(a)}{\Gamma(1-\alpha)}(x-a)^{-\alpha}\end{equation}
and
$${_x^CD_b^\alpha}f(x)={_xD_b^\alpha}f(x)-\frac{f(b)}{\Gamma(1-\alpha)}(b-x)^{-\alpha}.$$
If $f(a)=0$ or $ f(b)=0$, then ${_a^CD_x^\alpha}f(x)={_aD_x^\alpha}f(x)$ or ${_x^CD_b^\alpha}f(x)={_xD_b^\alpha}f(x)$, respectively.
\end{proposition}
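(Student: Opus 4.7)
The strategy is to start from the definition of the left Riemann--Liouville derivative and reduce it to the left Caputo derivative by an integration by parts inside the convolution integral. Specifically, in
$${_aD_x^\alpha}f(x)=\frac{1}{\Gamma(1-\alpha)}\frac{d}{dx}\int_a^x(x-t)^{-\alpha}f(t)\,dt,$$
I would integrate by parts with $u=f(t)$, $du=f'(t)\,dt$, $dv=(x-t)^{-\alpha}\,dt$, $v=-(x-t)^{1-\alpha}/(1-\alpha)$. Because $0<\alpha<1$, the antiderivative $(x-t)^{1-\alpha}$ vanishes at $t=x$, so the boundary contribution is purely at $t=a$ and equals $f(a)(x-a)^{1-\alpha}/(1-\alpha)$. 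This turns the inner integral into the sum of a polynomial-like boundary piece and $(1-\alpha)^{-1}\int_a^x(x-t)^{1-\alpha}f'(t)\,dt$.

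The next step is to apply the outer $\tfrac{d}{dx}$ and simplify. The $x$-derivative of $(x-a)^{1-\alpha}/(1-\alpha)$ is $(x-a)^{-\alpha}$, producing the stated ``defect'' term $f(a)(x-a)^{-\alpha}/\Gamma(1-\alpha)$. For the remaining integral, differentiation under the integral sign is valid since the vanishing factor $(x-t)^{1-\alpha}$ at $t=x$ kills the usual Leibniz boundary term, and what remains is $\int_a^x(x-t)^{-\alpha}f'(t)\,dt$, which is exactly $\Gamma(1-\alpha)\cdot {_a^C D_x^\alpha}f(x)$. Dividing by $\Gamma(1-\alpha)$ and rearranging yields the first identity.

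The right-sided identity is proved analogously: integrate by parts in $\int_x^b(t-x)^{-\alpha}f(t)\,dt$, observe that $(t-x)^{1-\alpha}$ vanishes at $t=x$ so that only the endpoint $t=b$ contributes, producing the boundary term $-f(b)(b-x)^{1-\alpha}/(1-\alpha)$; then apply the outer derivative with the minus sign in the definition of ${_xD_b^\alpha}$, which flips the sign appropriately to yield $-f(b)(b-x)^{-\alpha}/\Gamma(1-\alpha)$ as the correction. The corollary concerning $f(a)=0$ or $f(b)=0$ is then just the statement that the correction term vanishes.

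The only delicate point is the manipulation of the improper integral and the interchange of $\tfrac{d}{dx}$ with the integral near the singularity at $t=x$; the hypothesis that both the Riemann--Liouville and the Caputo derivatives exist guarantees that $f$ is regular enough (e.g.\ absolutely continuous with $f'$ integrable against the kernel $(x-t)^{-\alpha}$) for the integration by parts and the differentiation under the integral sign to be legitimate, so no extra regularity argument beyond that is needed.
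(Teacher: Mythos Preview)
Your argument is correct: integrating by parts inside the Riemann--Liouville integral, using that $(x-t)^{1-\alpha}$ vanishes at $t=x$ for $0<\alpha<1$, and then differentiating in $x$ is exactly the standard derivation of this identity, and your handling of the boundary terms and the Leibniz rule is accurate. Note, however, that the paper does not supply its own proof of this proposition; it is quoted as a known result from the references \cite{book:Podlubny,Kilbas}, and the argument you wrote is essentially the one found there.
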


It is well known that we can approximate the Riemann--Liouville fractional derivative using the Gr\"{u}nwald--Letnikov fractional derivative.
Given the interval $[a,b]$ and a partition of the interval $x_j=a+ jh$, for $j = 0,1,...,N$ and some $h>0$ such that $x_N=b$, we have
$${_aD_{x_j}^\alpha}f(x_j)= \frac{1}{h^\alpha}\sum_{k=0}^j(w_k^\alpha) f(x_{j-k})+O(h),$$
$${_{x_j}D_{b}^\alpha}f(x_j)= \frac{1}{h^\alpha}\sum_{k=0}^{N-j}(w_k^\alpha) f(x_{j+k})+O(h),$$
that is, the truncated Gr\"{u}nwald--Letnikov fractional derivatives are first-order approximations of the Riemann--Liouville fractional derivatives. Using the relation \eqref{approx_Caputo}, we deduce a decomposition sum for the Caputo fractional derivatives:
\begin{equation}\label{Caputo_appro:1}
{_a^CD_{x_j}^\alpha}f(x_j)\approx \frac{1}{h^\alpha}\sum_{k=0}^j(w_k^\alpha) f(x_{j-k})-\frac{f(a)}{\Gamma(1-\alpha)}(x_j-a)^{-\alpha}=: {_a^C\tilde{D}_{x_j}^\alpha}f(x_j),\end{equation}
\begin{equation}\label{Caputo_appro:2}
{_{x_j}^CD_{b}^\alpha}f(x_j)\approx \frac{1}{h^\alpha}\sum_{k=0}^{N-j}(w_k^\alpha) f(x_{j+k})-\frac{f(b)}{\Gamma(1-\alpha)}(b-x_j)^{-\alpha}=: {_{x_j}^C\tilde{D}_{b}^\alpha}f(x_j).\end{equation}

\subsection{Variational Problem}
\label{CVPProblem2}

Consider the following variational problem: to minimize the functional
\begin{equation}\label{C_functional}I[y]=\int_a^b L(x,y(x),{_a^CD_x^\alpha}y(x))\,dx,\end{equation}
subject to the boundary conditions
\begin{equation}\label{C_boundary} y(a)=y_a \, \mbox{ and } \, y(b)=y_b, \quad y_a,y_b\in\mathbb R,\end{equation}
where $0<\alpha<1$ and the Lagrange function $L:[a,b]\times \mathbb R^2\rightarrow\mathbb{R}$ is differentiable with respect to the second and third arguments.

\begin{theorem}(\cite{Agrawal1})
If $\overline y$ is a solution to \eqref{C_functional}--\eqref{C_boundary}, then $\overline y$ satisfies the following fractional differential equation
\begin{equation}\label{EQ_EL}\frac{\partial L}{\partial y}(x,y(x),{_a^CD_x^\alpha}y(x))+{_xD_b^\alpha}\frac{\partial L}{\partial{_a^CD_x^\alpha} y}(x,y(x),{_a^CD_x^\alpha}y(x))=0, \quad t\in[a,b].\end{equation}
\end{theorem}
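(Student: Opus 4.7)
The plan is to follow the standard variational argument adapted to the Caputo fractional setting. I would take an admissible variation $y = \overline{y} + \varepsilon\eta$, where $\eta:[a,b]\to\mathbb{R}$ is smooth and satisfies $\eta(a)=\eta(b)=0$ so that the perturbed curve still meets the boundary conditions \eqref{C_boundary}. Since $\overline{y}$ is a minimizer, the scalar function $\varepsilon \mapsto I[\overline{y}+\varepsilon\eta]$ has a critical point at $\varepsilon=0$.

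Next I would differentiate under the integral sign and use the linearity of ${_a^CD_x^\alpha}$ (so that ${_a^CD_x^\alpha}(\overline{y}+\varepsilon\eta) = {_a^CD_x^\alpha}\overline{y} + \varepsilon\,{_a^CD_x^\alpha}\eta$). Setting the derivative to zero yields the first variation
\begin{equation*}
\int_a^b\left[\frac{\partial L}{\partial y}\bigl(x,\overline{y},{_a^CD_x^\alpha}\overline{y}\bigr)\,\eta(x) + \frac{\partial L}{\partial\,{_a^CD_x^\alpha} y}\bigl(x,\overline{y},{_a^CD_x^\alpha}\overline{y}\bigr)\,{_a^CD_x^\alpha}\eta(x)\right]dx = 0.
\end{equation*}

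The crucial step is then to shift the Caputo derivative off $\eta$ by means of the fractional integration by parts formula, the continuous analogue of Lemma \eqref{int_by_parts}. For Caputo derivatives it reads
\begin{equation*}
\int_a^b g(x)\,{_a^CD_x^\alpha} \eta(x)\,dx = \int_a^b \eta(x)\,{_xD_b^\alpha} g(x)\,dx + \bigl[\text{boundary terms}\bigr],
\end{equation*}
with the boundary terms vanishing because $\eta(a)=\eta(b)=0$. Applied to $g = \partial L/\partial\,{_a^CD_x^\alpha}y$, this turns the first variation into a single integral against $\eta$:
\begin{equation*}
\int_a^b\left[\frac{\partial L}{\partial y} + {_xD_b^\alpha}\frac{\partial L}{\partial\,{_a^CD_x^\alpha} y}\right]\eta(x)\,dx = 0.
\end{equation*}

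Finally I would invoke the fundamental lemma of the calculus of variations: since the bracketed expression is continuous and $\eta$ is an arbitrary smooth function vanishing at the endpoints, the bracket must vanish identically on $[a,b]$, which is exactly \eqref{EQ_EL}. The main technical obstacle is the fractional integration by parts step, which requires the relevant regularity of the Caputo derivative and the partial derivative $\partial L/\partial\,{_a^CD_x^\alpha}y$ so that the boundary terms are well defined and actually vanish under $\eta(a)=\eta(b)=0$; once this analytical point is granted (as in \cite{Agrawal1}), the rest of the proof is a direct transcription of the classical argument.
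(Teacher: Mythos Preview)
Your argument is the standard and correct derivation of the fractional Euler--Lagrange equation: perturb by $\varepsilon\eta$ with $\eta(a)=\eta(b)=0$, differentiate at $\varepsilon=0$, apply fractional integration by parts to move ${_a^CD_x^\alpha}$ off $\eta$ (picking up the right Riemann--Liouville operator ${_xD_b^\alpha}$), and conclude via the fundamental lemma. This is exactly the approach in the cited reference \cite{Agrawal1}.

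Note, however, that the present paper does not give its own proof of this theorem; it is simply quoted from \cite{Agrawal1} as a known result, so there is no in-paper argument to compare against. Your write-up is consistent with Agrawal's original proof and with the analogous discrete argument the paper does carry out (e.g.\ in Theorem~\ref{thm:RQ}).
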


Relations like \eqref{EQ_EL} are known in the literature as the Euler--Lagrange equation, and provide a necessary condition that every solution of the variational problem must verify.
Adding to problem \eqref{C_functional}--\eqref{C_boundary} an integral constraint
\begin{equation}\label{C_constraint}\int_a^b g(x,y(x),{_a^CD_x^\alpha}y(x))\,dx=K,\end{equation}
where $K$ is a fixed constant and $g:[a,b]\times \mathbb R^2\rightarrow\mathbb{R}$ is a differentiable function with respect to the second and third arguments, we get an isoperimetric variational problem. In order to obtain a necessary condition for a minimizer we define the new function
\begin{equation}\label{C_iso}F:=\lambda_0 L(x,y(x),{_a^CD_x^\alpha}y(x))-\lambda g(x,y(x),{_a^CD_x^\alpha}y(x)),\end{equation}
where $\lambda_0,\lambda$ are Lagrange multipliers. Then every solution $\overline y$ of the fractional isoperimetric problem given by \eqref{C_functional}--\eqref{C_boundary} and \eqref{C_constraint} is also a solution to the fractional differential equation (c.f. \cite{Almeida1})
\begin{equation}\label{EQ_EL2}\frac{\partial F}{\partial y}(x,y(x),{_a^CD_x^\alpha}y(x))+{_xD_b^\alpha}\frac{\partial F}{\partial{_a^CD_x^\alpha} y}(x,y(x),{_a^CD_x^\alpha}y(x))=0, \quad t\in[a,b].\end{equation}
Moreover, if $\overline y$ is not a solution to
\begin{equation}\label{EL_I}
\frac{\partial g}{\partial y}(x,y(x),{_a^CD_x^\alpha}y(x))+{_xD_b^\alpha}\frac{\partial g}{\partial{_a^CD_x^\alpha} y}(x,y(x),{_a^CD_x^\alpha}y(x))=0, \quad t\in[a,b],
\end{equation}
then we can put $\lambda_0=1$ in \eqref{C_iso}.

\subsubsection{Discretization Method 1}
\label{Discretization1}

 Using the approximation formula for the Caputo fractional derivative given by \eqref{Caputo_appro:1}, we can discretize functional \eqref{C_functional} in the following way. Let $N\in\mathbb N$, $h=(b-a)/N$ and the grid $x_j=a+jh$, $j=0,1,\ldots, N$. Then
\begin{equation}\label{Discr1}\begin{array}{ll}
I[y]&=\displaystyle \sum_{k=1}^N\int_{x_{k-1}}^{x_k} L(x,y(x),{_a^CD_x^\alpha}y(x))\,dx\\
&\approx \displaystyle \sum_{k=1}^N h L(x_k,y(x_k),{_a^CD_{x_k}^\alpha}y(x_k))\\
&\approx \displaystyle \sum_{k=1}^N h L(x_k,y(x_k),{_a^C\tilde{D}_{x_k}^\alpha}y(x_k)).\\
\end{array}\end{equation}
This is the direct way to solve the problem, using discretization techniques.

\subsubsection{Discretization Method 2}
\label{Discretization2}

By the previous discussion, the initial problem of minimization of the functional  \eqref{C_functional}, subject to boundary conditions \eqref{C_boundary}, can be numerically replaced by the finite dimensional optimization problem
$$ \Phi(y_1,\ldots,y_{N-1}):= \sum_{k=1}^N h L(x_k,y(x_k),{_a^C\tilde{D}_{x_k}^\alpha}y(x_k)) \, \rightarrow \, \min,$$
subject to
$$y_0=y_a \, \mbox{ and } \, y_N=y_b$$
where $y_k:=y(x_k)$.

%
Using the first order necessary optimality conditions given by the following system of $N-1$ equations:
$$\frac{\partial \Phi}{\partial y_j}=0, \quad \forall j=1,\ldots, N-1,$$
we get
\begin{equation}\label{system1}\frac{\partial L}{\partial y}(x_j,y(x_j),{_a^C\tilde{D}_{x_j}^\alpha}y(x_j))+\sum_{k=0}^{N-j}\frac{(w_k^\alpha)}{h^\alpha}
\frac{\partial L}{\partial _a^CD_{x}^\alpha y}(x_{j+k},y(x_{j+k}),{_a^C\tilde{D}_{x_{j+k}}^\alpha}y(x_{j+k}))=0\end{equation}
$j=1,\ldots, N-1$.
As $N\to\infty$, that is, as $h\to0$, the solutions of system \eqref{system1} converge to the solutions of the fractional Euler--Lagrange equation associated to the variational problem (see \cite[Theorem 4.1]{Pooseh1}). Constrained variational problem given by \eqref{C_functional}--\eqref{C_boundary} and \eqref{C_constraint} can be solved similarly. More precisely, in this case we have to replace the Lagrange function $L$ by the augmented function $F=\lambda_0L-\lambda g$, and proceed with similar calculations.


\subsection{Sturm--Liouville problem}
\label{Sturm--Liouville problem}

Consider the fractional differential equation
\begin{equation}\label{eq:SLE}
\left[{}^{C}D^{\alpha}_{b}p(x){}^{C}D^{\alpha}_{a}+q(x)\right]y(x)= \lambda r_{\alpha}(x)y(x),
\end{equation}
subject to the boundary conditions
\begin{equation}\label{eq:BC}
y(a)=y(b)=0.
\end{equation}
Equation \eqref{eq:SLE}  together with condition \eqref{eq:BC} is called the fractional Sturm--Liouville problem. As in the discrete case, it is required to find the eigenfunctions and the eigenvalues of the given boundary value problem, i.e., the nontrivial solutions of \eqref{eq:SLE}--\eqref{eq:BC} and the corresponding values of the parameter $\lambda$.

In what follows we assume:
\begin{description}
\item[$(A)$] Let $\frac{1}{2}<\alpha<1$ and $p,q,r_{\alpha}$ be given functions such that: $p\in C^1[a,b]$
and $p(x)>0$ for all $x\in [a,b]$; $q,r_{\alpha}\in C[a,b]$, $r_{\alpha}(x)>0$ for all $x\in [a,b]$ and $(\sqrt{r_{\alpha}})'$ is H\"{o}lderian, of order $\beta\leq \alpha - \frac{1}{2}$, on $[a,b]$.
\end{description}

\begin{theorem}\label{thm:exist}(\cite{Tatiana1})
Under assumption (A), the fractional Sturm--Liouville Problem \eqref{eq:SLE}--\eqref{eq:BC}
has an infinite increasing sequence of eigenvalues $\lambda_{1}, \lambda_{2},...,$
and to each eigenvalue $\lambda_{k}$ there is a corresponding continuous eigenfunction
$y_{k}$ which is unique up to a constant factor.
\end{theorem}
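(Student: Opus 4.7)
The plan is to establish existence of the eigenvalues variationally, by minimizing a Rayleigh-type quotient under an isoperimetric constraint, exactly paralleling the discrete argument of Theorems~\ref{th:VP} and \ref{thm:RQ}. Associate to \eqref{eq:SLE}--\eqref{eq:BC} the functionals
\[
J[y]=\int_a^b\left[p(x)\bigl({}^{C}\!D^{\alpha}_{a}y(x)\bigr)^2+q(x)y(x)^2\right]dx,\qquad I[y]=\int_a^b r_{\alpha}(x)y(x)^2\,dx,
\]
and work in the fractional Sobolev space $E:=\{y\in L^2(a,b):{}^{C}\!D^{\alpha}_{a}y\in L^2(a,b),\ y(a)=y(b)=0\}$ (on which the Caputo and Riemann--Liouville derivatives agree by \eqref{approx_Caputo}). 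The natural guess is that the smallest eigenvalue is $\lambda_1=\inf\{J[y]/I[y]:y\in E,\ y\not\equiv 0\}$ and that higher eigenvalues arise from the standard min--max/Courant--Fischer recursion.

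First I would show that $J$ is coercive on the constraint surface $\{I[y]=1\}$: since $p(x)\geq p_{\min}>0$ and $q$ is bounded on $[a,b]$, $J[y]\geq p_{\min}\|{}^{C}\!D^{\alpha}_{a}y\|_{L^2}^2-C\|y\|_{L^2}^2$, and by a fractional Poincar\'e inequality the $E$-norm is controlled by $\|{}^{C}\!D^{\alpha}_{a}y\|_{L^2}$. Take a minimizing sequence $(y_n)\subset E$; it is bounded in $E$, so a subsequence converges weakly in $E$ to some $y_1$. Because $\alpha>\tfrac12$, the embedding $E\hookrightarrow C[a,b]$ is compact (this is exactly where the hypothesis on $\alpha$ is indispensable), whence $y_n\to y_1$ uniformly. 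Strong convergence preserves the constraint $I[y_1]=1$, and $J$ is weakly lower semicontinuous, so $J[y_1]=\lambda_1$. The Lagrange multiplier rule applied to this isoperimetric problem yields, via the fractional Euler--Lagrange equation \eqref{EQ_EL2}, precisely \eqref{eq:SLE} with $\lambda=\lambda_1$; thus $y_1$ is an eigenfunction.

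Subsequent eigenvalues $\lambda_n$ are produced inductively by minimizing $J$ over $\{y\in E:I[y]=1,\ \langle y,y_k\rangle_{r_\alpha}=0\text{ for }k<n\}$. The same compactness argument yields a minimizer $y_n$, and the orthogonality conditions are preserved in the limit because they are continuous in the $L^2$ topology. This produces a non-decreasing sequence $\lambda_1\leq\lambda_2\leq\cdots$. To see $\lambda_n\to\infty$, suppose the $\lambda_n$ are bounded; then $(y_n)$ is bounded in $E$, hence precompact in $L^2_{r_\alpha}(a,b)$, contradicting the fact that the $y_n$ are pairwise orthonormal in that space. Thus the eigenvalues accumulate only at $+\infty$, and any $L^2_{r_\alpha}$-limit of a weakly convergent eigenfunction subsequence must vanish, which together with completeness (proved by expanding in the $y_n$) shows that every eigenvalue of \eqref{eq:SLE}--\eqref{eq:BC} appears in the sequence.

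The hard part is establishing the simplicity of each eigenvalue, i.e. uniqueness up to constant factor. Because the fractional Sturm--Liouville operator is nonlocal, the classical Wronskian argument is unavailable, so one cannot invoke standard ODE uniqueness. The strategy I would follow is to use the Liouville-type substitution $z(x)=\sqrt{r_\alpha(x)}\,y(x)$, which under assumption $(A)$ --- specifically the H\"older regularity of $(\sqrt{r_\alpha})'$ with exponent $\beta\leq\alpha-\tfrac12$ --- converts \eqref{eq:SLE} into a fractional equation of the form $({}^{C}\!D^{\alpha}_{b}{}^{C}\!D^{\alpha}_{a}+Q)z=\lambda z$ plus a perturbation controlled in $L^2$, for which a Picard/contraction argument on a small interval gives local uniqueness of solutions with prescribed boundary data; a continuation argument then gives uniqueness on $[a,b]$. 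This is the most delicate step and is exactly the content for which hypothesis $(A)$ is tailored; all the earlier variational existence arguments need only $p,q,r_\alpha$ continuous with $p,r_\alpha>0$.
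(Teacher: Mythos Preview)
The paper does not contain its own proof of this theorem: the statement is quoted from \cite{Tatiana1} and immediately followed by the next theorem, with no proof environment in between. So there is nothing in the present paper to compare your proposal against.

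That said, your overall strategy---variational construction of eigenvalues by constrained minimization of $J[y]/I[y]$ in a fractional space with compact embedding into $C[a,b]$ when $\alpha>\tfrac12$, followed by a min--max recursion---is precisely the method used in the cited reference (its very title is ``Variational methods for the fractional Sturm--Liouville problem''), so the existence portion of your sketch is on target and matches the source argument in spirit.

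The genuine soft spot is your treatment of simplicity. You correctly identify that the classical Wronskian route is unavailable, but the proposed cure---a Liouville substitution $z=\sqrt{r_\alpha}\,y$ followed by a local Picard/contraction argument and continuation---does not obviously go through for a nonlocal operator: the fractional derivatives ${}^{C}\!D^{\alpha}_{a}$ and ${}^{C}\!D^{\alpha}_{b}$ depend on the full history over $[a,x]$ and $[x,b]$ respectively, so a ``small interval'' uniqueness statement does not localize in the way it would for an ODE, and continuation is not straightforward. In \cite{Tatiana1} simplicity is obtained instead by recasting \eqref{eq:SLE}--\eqref{eq:BC} as an integral equation via composition with fractional integrals and invoking compact-operator spectral theory (Hilbert--Schmidt type), where the H\"older hypothesis on $(\sqrt{r_\alpha})'$ is used to control the commutator arising from the weight. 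If you want a self-contained argument, that integral-operator route is the one to pursue; the local-uniqueness idea as stated would need substantial reworking to handle the two-sided nonlocality.
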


The fractional Sturm--Liouville problem can be remodeled as a fractional isoperimetric variational problem.

\begin{theorem}\label{thm:FE}(\cite{Tatiana1})
Let assumption (A) holds and $y^{1}$ be the eigenfunction, normalized to satisfy the isoperimetric constraint
\begin{equation}\label{eq:IC2}
I[y]=\int_a^{b}r_{\alpha}(x)y^2(x)\;dx=1,
\end{equation}
associated to the first eigenvalue $\lambda_{1}$ of problem \eqref{eq:SLE}--\eqref{eq:BC} and assume that function $D^{\alpha}_{b}(p{}^{C}D^{\alpha}_{a}y^{1})$ is continuous. Then, $y^{1}$ is a minimizer of the following variational functional:
\begin{equation}\label{eq:F2}
J[y]=\int_a^{b}\left[p(x) ({}^{C}D^{\alpha}_{a}y(x))^{2}+q(x)y^{2}(x)\right]\;dx ,
\end{equation}
in the class of $C[a,b]$ functions with ${}^{C}D^{\alpha}_{a}y$  and $D^{\alpha}_{b}(p{}^{C}D^{\alpha}_{a}y)$ continuous in $[a,b]$, subject to the boundary conditions
\begin{equation}\label{eq:BC2}
y(a)=y(b)=0
\end{equation}
and isoperimetric constraint \eqref{eq:IC2}. Moreover,
\begin{equation*}
J[y^{1}]=\lambda_{1}.
\end{equation*}
\end{theorem}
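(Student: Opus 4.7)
The strategy mirrors the proof of Theorem~\ref{th:VP} in the discrete case, with the summation-by-parts identity \eqref{int_by_parts} replaced by the corresponding fractional integration-by-parts formula for Caputo and right Riemann--Liouville derivatives. The plan is to show (i) that any minimizer of $J$ under the isoperimetric constraint \eqref{eq:IC2} must satisfy the Sturm--Liouville equation \eqref{eq:SLE} with a Lagrange multiplier $\lambda$ that plays the role of an eigenvalue, and (ii) that for any such critical function one has $J[y]=\lambda$, so that minimality forces $\lambda=\lambda_1$.

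First, I would appeal to the fractional isoperimetric Euler--Lagrange machinery recalled in Subsection~\ref{CVPProblem2}: with the Lagrangian $L(x,y,u)=p(x)u^2+q(x)y^2$ and the constraint integrand $g(x,y,u)=r_\alpha(x)y^2$, the augmented function $F=\lambda_0 L-\lambda g$ leads, via \eqref{EQ_EL2}, to
\begin{equation*}
2\lambda_0\,q(x)y(x)-2\lambda\,r_\alpha(x)y(x)+{}_xD_b^{\alpha}\bigl(2\lambda_0\,p(x)\,{}^{C}D_a^{\alpha}y(x)\bigr)=0.
\end{equation*}
Ruling out the abnormal case $\lambda_0=0$ reduces, via \eqref{EL_I}, to showing that a minimizer is not a solution of the Euler--Lagrange equation associated with $g$ alone; but that auxiliary equation is $r_\alpha(x)y(x)=0$, which has only the trivial solution under assumption $(A)$, contradicting $I[y]=1$. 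Thus one may set $\lambda_0=1$, and the equation above becomes exactly \eqref{eq:SLE}. Note that the two fractional derivatives appearing in \eqref{eq:SLE} and the Euler--Lagrange equation agree, since $y(a)=0$ forces ${}^{C}D_a^{\alpha}y={}_aD_x^{\alpha}y$ by the proposition relating the Caputo and Riemann--Liouville derivatives.

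Next, I would multiply \eqref{eq:SLE} by $y(x)$ and integrate over $[a,b]$:
\begin{equation*}
\int_a^{b} y(x)\,{}_xD_b^{\alpha}\bigl(p(x)\,{}^{C}D_a^{\alpha}y(x)\bigr)\,dx+\int_a^{b}q(x)y^2(x)\,dx=\lambda\int_a^{b}r_\alpha(x)y^2(x)\,dx.
\end{equation*}
Because $y(a)=y(b)=0$, the boundary terms in the fractional integration-by-parts formula vanish, giving
\begin{equation*}
\int_a^{b} y(x)\,{}_xD_b^{\alpha}\bigl(p(x)\,{}^{C}D_a^{\alpha}y(x)\bigr)\,dx=\int_a^{b} p(x)\,\bigl({}^{C}D_a^{\alpha}y(x)\bigr)^{2}\,dx,
\end{equation*}
and combined with the isoperimetric normalization $I[y]=1$ this yields $J[y]=\lambda$. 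Hence at every admissible critical point of $J$ the value of the functional equals its Lagrange multiplier, which by Theorem~\ref{thm:exist} is one of the eigenvalues $\lambda_k$.

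Finally, since the spectrum is bounded below by $\lambda_1$, a minimizer must attain the value $\lambda_1$. Conversely, the first eigenfunction $y^1$ (normalized to $I[y^1]=1$) satisfies \eqref{eq:SLE} with $\lambda=\lambda_1$, so the same integration-by-parts computation gives $J[y^1]=\lambda_1$, identifying $y^1$ as the minimizer. The main technical obstacle I anticipate is the justification of the fractional integration-by-parts step: one must ensure that the hypotheses on $y$, in particular the continuity of ${}^{C}D_a^{\alpha}y$ and of $D_b^{\alpha}(p\,{}^{C}D_a^{\alpha}y)$, together with $y(a)=y(b)=0$ and the regularity of $p,q,r_\alpha$ imposed in $(A)$, are sufficient to make both the boundary terms vanish and the resulting integrals absolutely convergent; this is precisely where the assumption $\tfrac{1}{2}<\alpha<1$ and the H\"older condition on $(\sqrt{r_\alpha})'$ play their role.
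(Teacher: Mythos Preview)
The paper does not prove this theorem; it merely quotes it from \cite{Tatiana1}. There is therefore no in-paper proof to compare against directly. Your sketch is the natural continuous analogue of the paper's proof of Theorem~\ref{th:VP}, and it is indeed the argument used in \cite{Tatiana1}: derive the Euler--Lagrange equation for the isoperimetric problem, rule out the abnormal multiplier, integrate the resulting Sturm--Liouville equation against $y$, apply fractional integration by parts, and conclude $J[y]=\lambda$, so that the minimum is $\lambda_1$.

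One point deserves more care. You write that ``the two fractional derivatives appearing in \eqref{eq:SLE} and the Euler--Lagrange equation agree, since $y(a)=0$ forces ${}^{C}D_a^{\alpha}y={}_aD_x^{\alpha}y$''. But that identity concerns the \emph{inner} derivative, which is already Caputo in both places; the actual discrepancy is in the \emph{outer} operator: the Euler--Lagrange equation \eqref{EQ_EL2} produces the right Riemann--Liouville derivative ${}_xD_b^{\alpha}\bigl(p\,{}^{C}D_a^{\alpha}y\bigr)$, whereas \eqref{eq:SLE} is written with the right Caputo derivative ${}^{C}D_b^{\alpha}$. These coincide only if $p(b)\,{}^{C}D_a^{\alpha}y(b)=0$, which is \emph{not} a consequence of $y(a)=y(b)=0$. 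In \cite{Tatiana1} this is handled by the precise choice of admissible class and the regularity hypothesis you flagged at the end (continuity of $D_b^{\alpha}(p\,{}^{C}D_a^{\alpha}y)$ together with $\frac{1}{2}<\alpha<1$), but your sentence as written misidentifies which derivative needs the justification. Apart from this, the structure of your argument is correct and matches the source.
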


\subsubsection{Discretization Method 3}
\label{Discretization1:SL}

 Using the approximation formula for the Caputo fractional derivatives given by \eqref{Caputo_appro:1}--\eqref{Caputo_appro:2}, we can discretize equation \eqref{eq:SLE} in the following way. Let $N\in\mathbb N$, $h=(b-a)/N$ and the grid $x_j=a+jh$, $j=0,1,\ldots, N$. Then at $x=x_i$, \eqref{eq:SLE} may be discretized as:

\[\frac{h^{-2\alpha}}{r_{\alpha}(x_i)}\sum_{k=0}^{N-i}\left(w_k^{\alpha}\right)p(x_{i+k})\sum_{l=0}^{i+k}\left(w_l^{\alpha}\right)y_{i+k-l}
+\frac{q(x_i)}{r_{\alpha}(x_i)}y_i=\lambda y_i, \quad i=1,\ldots,N-1,\]
which in the matrix form, may be written as
\begin{equation}\label{MF_GL}AY=\lambda Y,\end{equation}
where $Y=[y_1~~y_2\ldots y_{N-1}]$, $y_i=y(x_i)$, and $A=\left(c_{ik}\right)$, $i=1,2,\ldots,N-1$, $k=1,2,\ldots,N-1$, with $\displaystyle c_{ik}=\begin{cases}
\frac{h^{-2\alpha}}{r_{\alpha}(x_i)}\sum_{j=0}^{N-i}\left(w_j^{\alpha}\right)^2p(x_{j+i})+\frac{q(x_i)}{r_{\alpha}(x_i)}, & i=k\\
\frac{h^{-2\alpha}}{r_{\alpha}(x_i)}\sum_{j=0}^{N-i}\left(w_j^{\alpha}\right)\left(w_{j+i-k}^{\alpha}\right)p(x_{j+i}), & i>k\\
\frac{h^{-2\alpha}}{r_{\alpha}(x_i)}\sum_{j=k-i}^{N-i}\left(w_j^{\alpha}\right)\left(w_{j+i-k}^{\alpha}\right)p(x_{j+i}), & i<k
\end{cases}$,\\
reducing in this way the Sturm-Liouville problem to an algebraic eigenvalue problem.

%
%

\begin{example}
Let us consider the following problem:
minimize the functional
\begin{equation}\label{ex:e:2}
\int_0^1({_0^CD_x^\alpha}y(x))^2\,dx,
\end{equation}
under the restrictions
\begin{equation}\label{ex:e:22}
\int_0^1y^2(x)\,dx=1 \quad \mbox{and} \quad y(0)=y(1)=0,
\end{equation}
where $\alpha=3/4$.
Since $y(0)=0$, we have ${_0^CD_x^\alpha}y(x)={_0D_x^\alpha}y(x)$. Discretizing the problem, as explained in Section~\ref{Discretization1}, we obtain a finite dimensional constrained optimization problem:
\begin{equation}\label{op:1}
\sum_{k=1}^N N^{2\alpha-1}\left(\sum_{i=0}^k (w^\alpha_i)y_{k-i}\right)^2 \, \rightarrow \, \min,
\end{equation}
subject to
\begin{equation}\label{op:2}
\sum_{k=1}^N\frac{y^2_k}{N}=1  \quad \mbox{and} \quad  y_0=y_N=0.
\end{equation}
Using the Maple package \textit{Optimization}, we get approximations of the optimal solutions to \eqref{ex:e:2}--\eqref{ex:e:22} for different values of $N$. Table~\ref{table1} shows values of $\lambda_1$ for $N=5,10,15$. Note that $\lambda_1$ is the value of \eqref{op:1}, where $\overline y=[0,y_1,\ldots,y_{N-1},0]$ is the optimal solution to \eqref{op:1}--\eqref{op:2}. In other words, $\lambda_1$ is an approximation of the minimum value of functional \eqref{ex:e:2} and the first eigenvalue of the Sturm--Liouville (which is the Euler--Lagrange equation for considered variational problem). Figure~\ref{plot1} presents minimizers $\overline y$ for $N=5,10,15$.

\begin{table}[ht]
\centering
\begin{tabular}{|c|c|c|c|}\hline
 $N$ & $5$ & $10$ & $15$\\ \hline
 $\lambda_1$ & $4.603751971$ & $4.491185175$&$4.426964914$ \\ \hline
\end{tabular}
\\
\caption{Values of $\lambda_1$ for $N=5,10,15$.}
\label{table1}
\end{table}

\begin{figure}
  \centering
  \includegraphics[width=7cm]{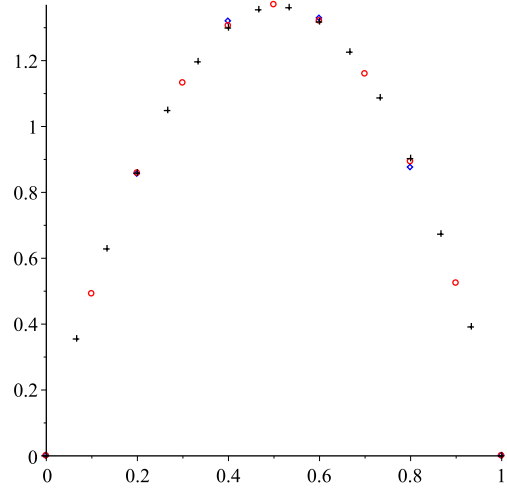}\\
  \caption{Approximation of solutions to problem \eqref{ex:e:2}--\eqref{ex:e:22} (Method 1): $\diamond ( N=5)$; $\circ ( N=10)$; $+ (N=15)$.}\label{plot1}
\end{figure}

Observe that the unique solution to the Euler--Lagrange equation (cf. \eqref{EL_I}) associated to the integral constraint is $\overline y (x)=0$. As $\overline y (x)=0$ is not a solution to \eqref{ex:e:2}--\eqref{ex:e:22} (condition $\int_0^1y^2(x)\,dx=1$ fails), we can consider $\lambda_0=1$ in \eqref{C_iso}. Therefore the auxiliary function is
$$F:=({_0^CD_x^\alpha}y(x))^2-\lambda y^2(x).$$
Thus,
$$\Phi(y_1,\ldots,y_{N-1}):=\sum_{k=1}^N h\left(({_0^CD_{x_k}^\alpha}y_k)^2-\lambda y^2_k\right),$$
and the computation of $\partial \Phi / \partial y_j$ leads to
\begin{equation}\label{system}
-\lambda y_j + N^{2\alpha}\sum_{k=0}^{N-j} (w^\alpha_k) \sum_{l=0}^{j+k} (w^\alpha_l)y_{j+k-l}=0, \quad j=1,\ldots, N-1.
\end{equation}
Solving system of equations \eqref{system} together with \eqref{op:2} we obtain not only an approximation of the optimal solution to problem \eqref{ex:e:2}--\eqref{ex:e:22}, but also other solutions to the Euler--Lagrange equation \eqref{EQ_EL2} as $N\to\infty$. In other words, we get some approximations of the eigenvalues and eigenfunctions of the Sturm--Liouville problem. Table~\ref{table2} presents approximations of the eigenvalues obtained by this procedure for $N=5,10,15$.
In Figure \ref{plot2} we present the optimal solution for this procedure, that corresponds to the eigenvector associated with the eigenvalue $\lambda_1$.

\begin{table}[ht]
\centering
\begin{tabular}{|c|c|c|c|}\hline
$N$& $5$ & $10$ & $15$\\ \hline
$\lambda_1$ & $4.603751969$ & $4.491185168$&$4.426964909$ \\ \hline
$\lambda_2$ &$13.67144835$ &$14.31569449$&$14.33350940$\\ \hline
$\lambda_3$ &$22.69092491$& $26.35335634$&$26.90113751$\\ \hline
$\lambda_4$ & $29.24531071$&$39.48118456$ &$41.37391615$\\ \hline
$\lambda_5 $&-- & $52.54234156$ &$56.93534748$ \\ \hline
$\lambda_6 $&-- & $64.64953668$&$73.03700902$\\ \hline
$\lambda_7 $&-- &$74.96494602$ &$89.07875858$\\ \hline
$\lambda_8 $&-- &$82.83813371$  &$104.5749014$\\ \hline
$\lambda_9 $&-- &$87.76536891$  &$119.0339408$\\ \hline
$\lambda_{10}$ &-- & -- &$132.0436041$ \\ \hline
$\lambda_{11} $&-- &--  &$143.2212682$ \\ \hline
$\lambda_{12} $&-- & -- & $152.2566950$\\ \hline
$\lambda_{13} $&-- & -- &$158.8942685$\\ \hline
$\lambda_{14} $&-- & -- &$162.9518168$\\ \hline
\end{tabular}
\\
\caption{Values of $\lambda_{i}$ for $N=5,10,15$.}
\label{table2}
\end{table}

\begin{figure}
  \centering
  \includegraphics[width=7cm]{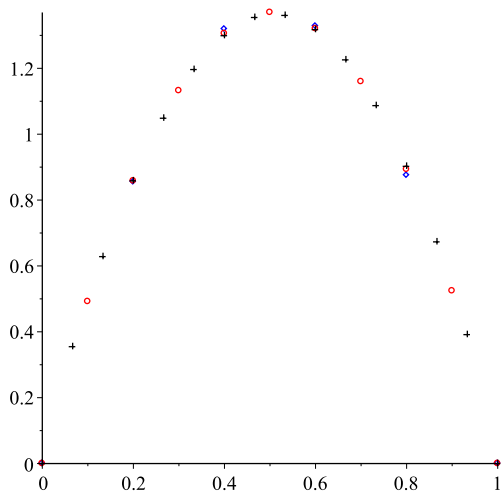}\\
  \caption{Approximation of solutions to problem \eqref{ex:e:2}--\eqref{ex:e:22} (Method 2): $\diamond ( N=5)$; $\circ ( N=10)$; $+ (N=15)$.}\label{plot2}
\end{figure}

In Figures \ref{plot3}, \ref{plot4} and \ref{plot5} we compare the approximation of the optimal solutions to \eqref{ex:e:2}--\eqref{ex:e:22}, obtained by solving \eqref{op:1}--\eqref{op:2} (Method 1) and \eqref{system}--\eqref{op:2} (Method 2), for $N=5,10,15$.

\begin{figure}
  \centering
  \includegraphics[width=7cm]{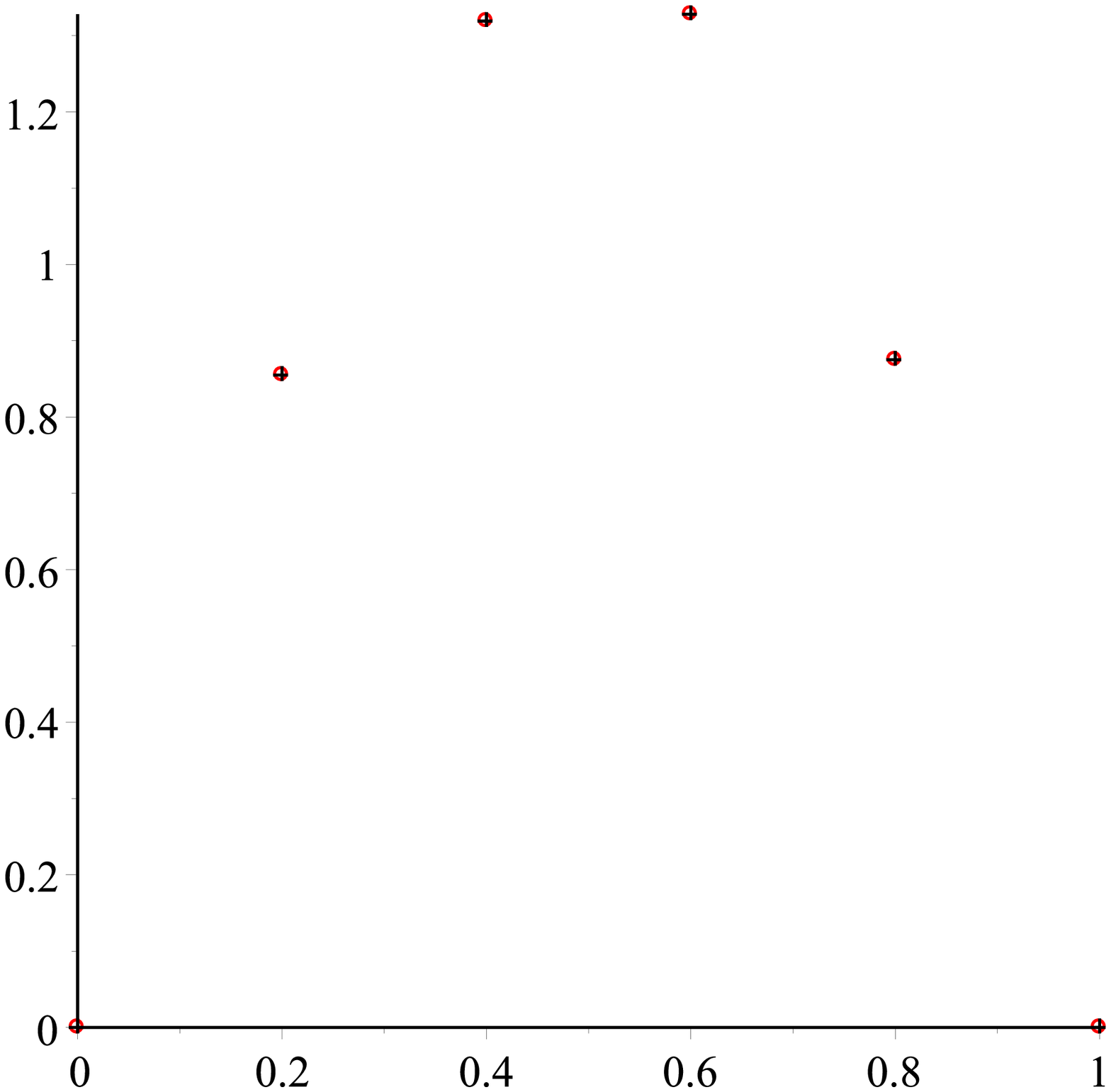}\\
  \caption{$N=5$: $\circ (\mbox{Method 1})$; $+ (\mbox{Method 2})$.}\label{plot3}
\end{figure}

\begin{figure}
  \centering
  \includegraphics[width=7cm]{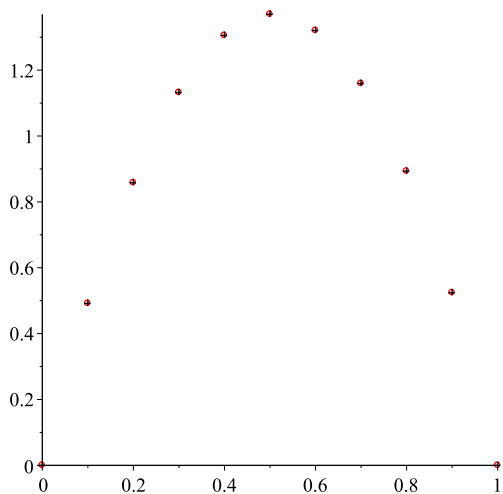}\\
  \caption{$N=10$: $\circ (\mbox{Method 1})$; $+ (\mbox{Method 2})$.}\label{plot4}
\end{figure}

\begin{figure}
  \centering
  \includegraphics[width=7cm]{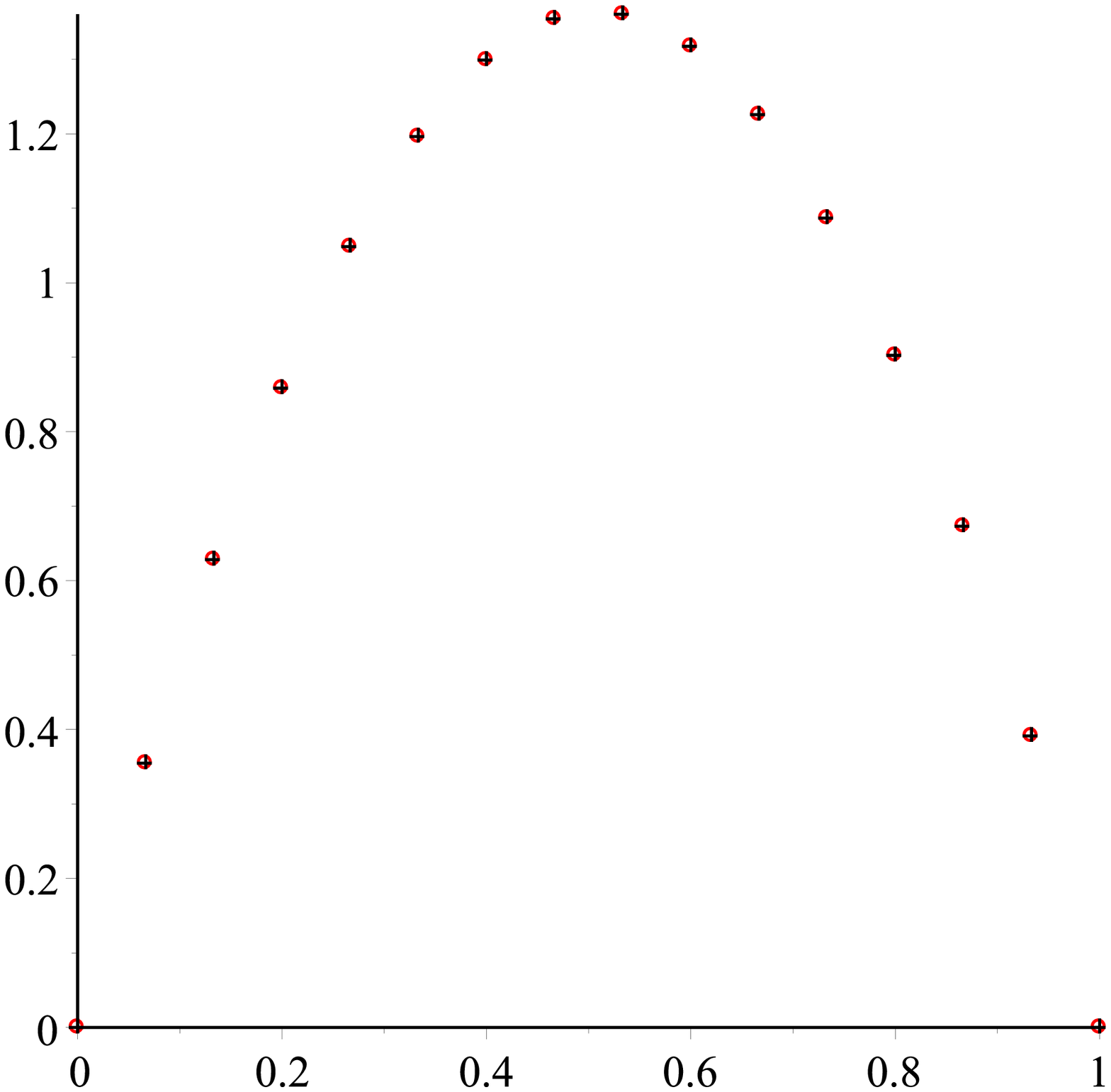}\\
  \caption{$N=15$: $\circ (\mbox{Method 1})$; $+ (\mbox{Method 2})$.}\label{plot5}
\end{figure}
Now let us consider the Sturm--Liouville problem
\begin{equation}\label{eq:ex:2}
{}^{C}D^{3/4}_{1}{}^{C}D^{3/4}_{0}y(x)=\lambda y(x),
\end{equation}
subject to the boundary conditions
\begin{equation}\label{eq:ex:BC}
y(0)=y(1)=0.
\end{equation}
Note that, under conditions of Theorem~\ref{thm:FE}, equation \eqref{eq:ex:2} is the Euler--Lagrange equation for isoperimetric problem \eqref{ex:e:2}--\eqref{ex:e:22}. Table~\ref{table3} presents approximations of the eigenvalues of \eqref{eq:ex:2} obtained by discretization method \ref{Discretization1:SL} for $N=5,10,20,40,80,160$ (for $N=20,40,80,160$ only the first 14 eigenvalues are listed). In Figure~\ref{ef_luisa} we present normalized eigenfunctions, obtained for $N=100$, corresponding to the eigenvalues $\lambda_1$, $\lambda_2$, $\lambda_3$ and $\lambda_4$.

\begin{table}[ht]\label{table3}
\centering
\begin{tabular}{|c|c|c|c|c|c|c|}
\hline
$N$ & $5$ & $10$ & $20$ & $40$ & $80$ & $160$\\
\hline
$\lambda_{1}$ & $4.603751972$ &  $4.491185175$ & $4.387575384$ & $4.314056432$ & $4.264767769$ & $4.231946921$\\
\hline
$\lambda_{2}$ & $13.67144835$ & $14.31569450$ & $14.29943076$ & $14.18275912$ & $14.08194289$ & $14.01015799$ \\
\hline
$\lambda_{3}$ & $22.69092491$ & $26.35335634$ & $27.02784640$ & $27.01132309$ & $26.88877847$ & $26.78184511$ \\
\hline
$\lambda_{4}$ & $29.24531071$ & $39.48118456$ & $41.95747334$ & $42.33045300$ & $42.25841874$ & $42.13429128$ \\
\hline
$\lambda_{5}$ & $-$ & $52.54234157$ & $58.40981791$ & $59.60122278$ & $59.68496380$ & $59.56753673$\\
\hline
$\lambda_{6}$ & $-$ & $64.64953668$ & $75.99486098$ & $78.61012095$ & $79.00578911$ & $78.93437596$\\
\hline
$\lambda_{7}$ & $-$ & $74.96494602$ & $94.25189512$ & $99.05856280$ & $99.96479334$ & $99.98764503$\\
\hline
$\lambda_{8}$ & $-$ & $82.83813372$ & $112.8375161$ & $120.7904806$ & $122.4632696$ & $122.6454529$\\
\hline
$\lambda_{9}$ & $-$ & $87.76536891$ & $131.3694072$ & $143.5891552$ & $146.3337902$ & $146.7516690$\\
\hline
$\lambda_{10}$ & $-$ & $-$ & $149.5318910$ & $167.3194776$ & $171.5033012$ & $172.2513810$\\
\hline
$\lambda_{11}$ & $-$ & $-$ & $166.9946039$ & $191.8029693$ & $197.8472756$ & $199.0332700$\\
\hline
$\lambda_{12}$ & $-$ & $-$ & $183.4744810$ & $216.9142113$ & $225.3053712$ & $227.0563318$ \\
\hline
$\lambda_{13}$ & $-$ & $-$ & $198.6917619$ & $242.4962397$ & $253.7773704$ & $256.2351380$ \\
\hline
$\lambda_{14}$ & $-$ & $-$ & $212.4063351$ & $268.4292940$ & $283.2100111$ & $286.5368179$ \\
\hline
\end{tabular}
\caption{Approximation of the eigenvalues using method \ref{Discretization1:SL}.}\label{GL1}
\end{table}

\begin{figure}\label{ef_luisa}
\begin{tabular}{|c|c|}
\hline
\scalebox{0.6} {\includegraphics{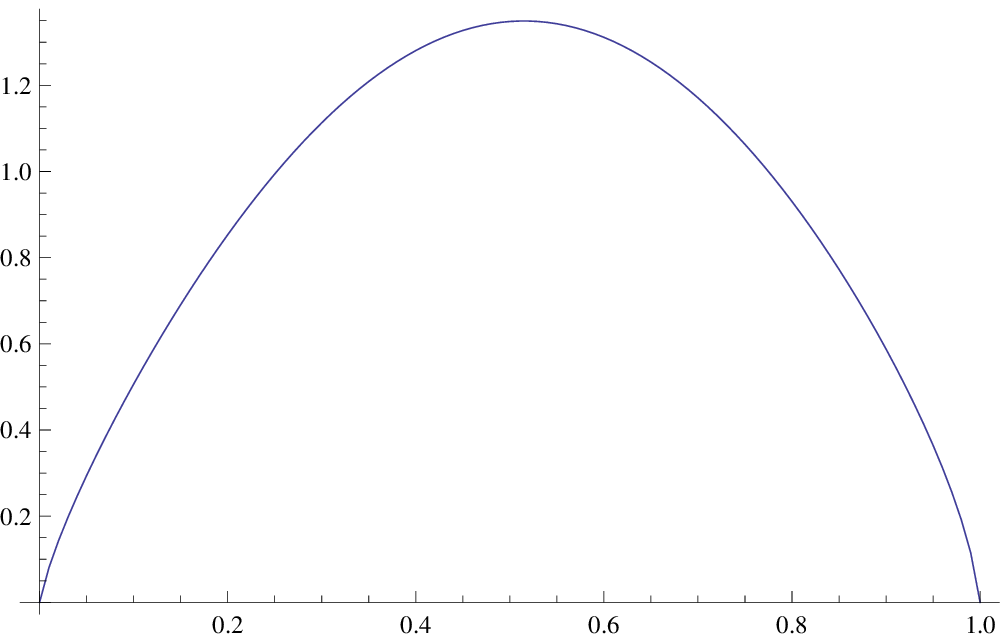}} & \scalebox{0.6} {\includegraphics{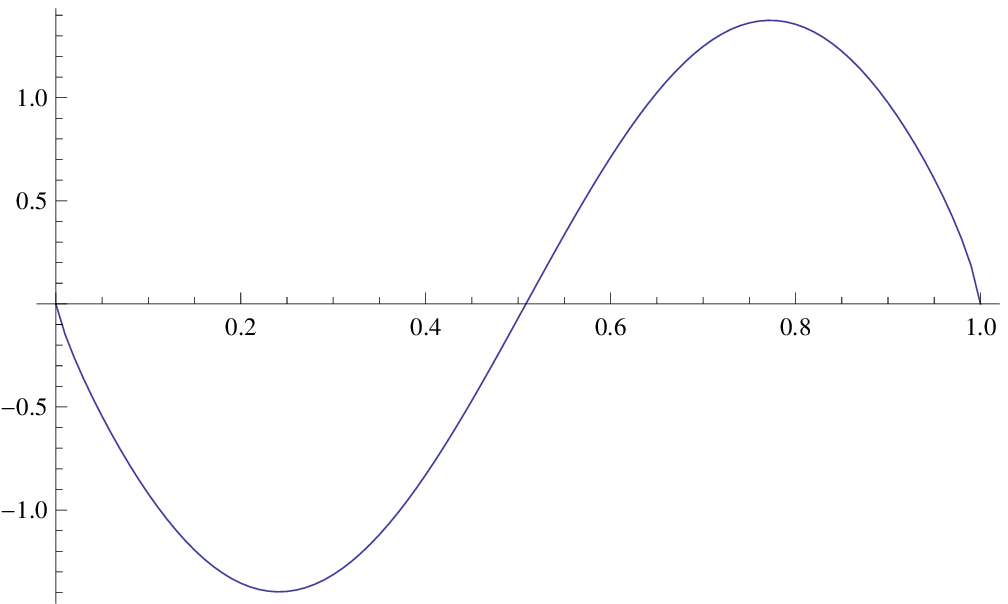}}\\
\hline
\scalebox{0.6} {\includegraphics{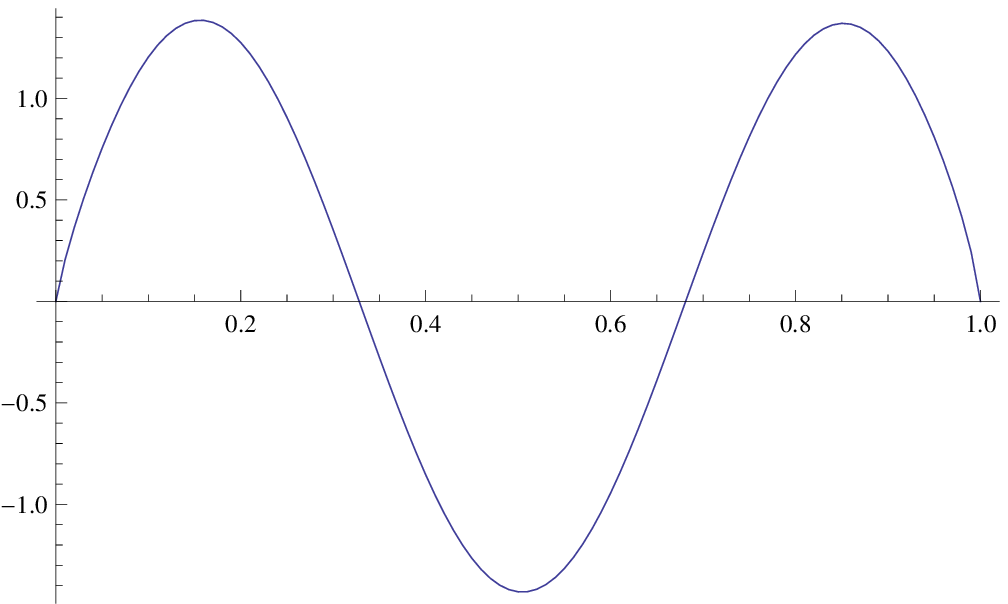}} & \scalebox{0.6} {\includegraphics{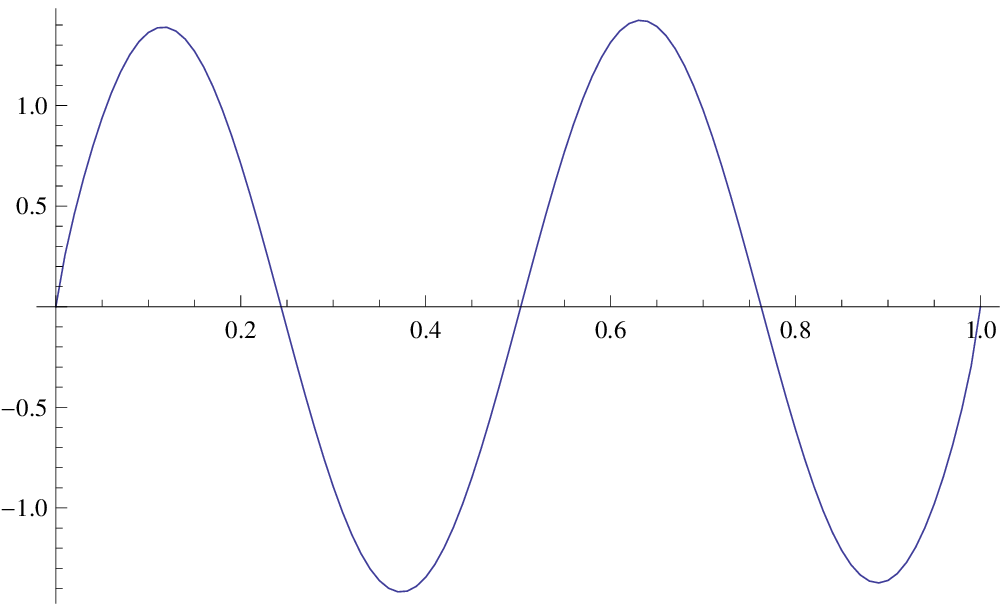}}\\
\hline
\end{tabular}
\caption{Normalized eigenfunctions obtained with $N=100$, corresponding to the  eigenvalues $\lambda_1$, $\lambda_2$, $\lambda_3$ and $\lambda_4$ (top left, top right, bottom left and bottom right, respectively).}
\end{figure}

\end{example}

\section{Conclusions}

Since 1986, when the seminal works were published \cite{Nigmat,Wyss},
fractional differential equations have become a popular way to model anomalous diffusion. As it is stated in \cite{MMM}
this type of approach is the most reasonable: the fractional derivative in space codes large particle jumps (that lead to anomalous super-diffusion) while the time-fractional derivative models time delays between particle motion. Fractional diffusion equations have been used, e.g., to model pollution in ground water \cite{ben} and flow in porous media \cite{He}. Many other examples can be found in \cite{MMM,MMM2}.

It was proved in \cite{Tatiana2} that, under appropriate assumptions,
the following space--time fractional diffusion equation
\begin{equation}\label{eq:Dif:1}
{}^{C}D^{\beta}_{0+,t} u(t,x)=-\frac{1}{r_{\alpha}(x)}\left[{}^{C}D^{\alpha}_{b-,x}p(x){}^{C}D^{\alpha}_{a+,x}+q(x)\right] u(t,x),\textnormal{ for all }(t,x) \in (0,\infty)\times [a,b],
\end{equation}
where, $0<\beta<1$, $\frac{1}{2}<\alpha<1$, and ${}^{C}D^{\beta}_{0+,t}$, ${}^{C}D^{\alpha}_{b-,x}$, ${}^{C}D^{\alpha}_{a+,x}$ are partial fractional derivatives, with the boundary and initial conditions:
\begin{equation}\label{eq:BC:Dif:1}
u(t,a)=u(t,b)=0,~~t\in (0,\infty),
\end{equation}
\begin{equation}\label{eq:IC:Dif:1}
u(0,x)=f(x),~~x\in [a,b],
\end{equation}
 has a continuous solution $u:[0,\infty)\times [a,b]\rightarrow\R$ given by the series
\begin{equation}\label{eq:sol:Diff:11}
u(t,x)=\sum\limits_{k=1}^{\infty} \langle y_k,f\rangle E_{\beta}(-\lambda_k t^{\beta})y_k(x).
\end{equation}
In \eqref{eq:sol:Diff:11}: $\langle f,g \rangle:= \int_{a}^{b}r_{\alpha}(x)f(x) g(x) \; dx,$ $E_{\beta}$ the one-parameter Mittag--Leffler function, $y_k$ and $\lambda_k$ ($k=1,2,\ldots$) are the eigenfunctions and the eigenvalues of the fractional Sturm--Liouville problem \eqref{eq:SLE}--\eqref{eq:BC}. Thus numerical methods, presented in this paper, for finding the eigenvalues and
eigenfunctions of fractional Sturm-Liouville problems can be also used
to approximate solution to fractional diffusion problems
of the form \eqref{eq:Dif:1}--\eqref{eq:IC:Dif:1}.
 We have presented a link between fractional Sturm--Liouville and fractional isoperimetric variational problems that provides a possible method for solution of those firstly mentioned problems. Discrete problems with the Gr\"{u}nwald--Letnikov difference were analyzed: we proved the existence of orthogonal solutions to the discrete fractional Sturm--Liouville eigenvalue problem and showed that its eigenvalues can be characterized as values of certain functionals. For continuous problems with the Caputo fractional derivatives, in order to examine the performance of the proposed method, the approximation based on the shifted Gr\"{u}nwald--Letnikov definition were used. This type of discretization is most popular in practical application, when solving numerically fractional diffusion equations, due to the fact that codes are mass-preserving \cite{Defterli}.

\section*{Acknowledgements}
Research supported by Portuguese funds through the CIDMA - Center for Research and Development in Mathematics and Applications, and the Portuguese Foundation for Science and Technology (FCT-Funda\c{c}\~ao para a Ci\^encia e a Tecnologia), within project UID/MAT/04106/2013 (R. Almeida), by
the Bialystok University of Technology grant S/WI/1/2016 (A. B. Malinowska), and
by the Warsaw School of Economics grant KAE/S15/35/15 (T. Odzijewicz).

\end{document}